\newcommand{\Esp}{\mathbb{E}}
\newcommand{\E}{\mathbb{E}}
\newcommand{\Var}{\mathrm{var}}
\newcommand{\sgn}{\mathrm{sgn}}
\newcommand{\Hilb}{\mathcal{H}}
\newcommand{\R}{\mathbb{R}}
\newcommand{\N}{\mathbb{N}}
\newcommand{\T}{\mathcal{T}}
\newcommand{\Dt}{D_1^{\textrm{tot}}}
\newcommand{\tot}{\text{tot}}
\newcommand{\multi}{{\underline{\ell}}}
\newcommand{\Poinc}{C_\textrm{P}}
\newtheorem{cor}{Corollary}
\newtheorem{prop}{Proposition}
\newtheorem{defi}{Definition}
\newtheorem{rem}{Remark}
\newtheorem{example}{Example}
\title{Sensitivity Analysis and Generalized Chaos Expansions. Lower Bounds for Sobol indices.}
\author[1]{O. Roustant}
\author[2]{F. Gamboa}
\author[3,2]{B. Iooss}
\affil[1]{{\small Mines Saint-\'{E}tienne, Univ. Clermont Auvergne, CNRS, UMR 6158 LIMOS, F--42023 Saint-\'{E}tienne, France}}
\affil[2]{{\small Institut de Math\'ematiques de Toulouse, Universit\'e Paul Sabatier, 31062 Toulouse Cedex 9, France}}
\affil[3]{{\small Electricit\'e de France R\&D, 6 quai Watier, Chatou, F-78401, France}}
\date{} 
\begin{document}

\maketitle

\begin{abstract}
The so-called polynomial chaos expansion is widely used in computer experiments. For example, it is a powerful tool to estimate Sobol' sensitivity indices. In this paper, we consider generalized chaos expansions built on general tensor Hilbert basis.
In this frame, we revisit the computation of the Sobol' indices and give general lower bounds for these indices. The case of the eigenfunctions system associated with a Poincar\'e differential operator leads to lower bounds involving the derivatives of the analyzed function and provides an efficient tool for variable screening. These lower bounds are put in action both on toy and real life models demonstrating their accuracy.    
\end{abstract}

\tableofcontents


\section{Introduction}

Computer models simulating physical phenomena and industrial systems are commonly used in engineering and safety studies. They often take as inputs a high number of numerical and physical variables. 
For the development and the analysis of such computer models, the global sensitivity analysis methodology is an invaluable tool that allows to rank the relative importance of each input of the system \cite{ioosal17}, \cite{Iooss_Lemaitre_review}.
Referring to a probabilistic modeling of the model input variables, it accounts for the whole input range of variation, and tries to explain output uncertainties on the basis of input uncertainties. 
Thanks to the so-called functional ANOVA (analysis of variance) decomposition \cite{Antoniadis_1984}, the Sobol' indices give, for a square integrable non-linear model and stochastically independent input variables, the parts of the output variance due to each input and to each interaction between inputs \cite{Sobol_1993}, \cite{Hoeffding_1948}. In addition, the total Sobol' index provides the overall contribution of each input \cite{homsal96}, including interactions with other inputs.  More generally, we recall that a Sobol' index associated to a subset of variables $I$
is the ratio of the ANOVA index (that is the $L^2$ norm of the contribution associated to $I$ in the ANOVA decomposition), and the variance of the output (see Section \ref{sec:background} for the precise definition). 

Many methods exist to accurately compute or statistically estimate the first-order Sobol' indices. For a general overview on these methods, we refer to \cite{Iooss_Lemaitre_review} and references therein.
One of the most popular and powerful method is polynomial chaos (PC) expansion \cite{ghaspa91}, \cite{sud08a}.
It consists in approximating the response onto the specific basis made by the orthonormal polynomials built on the input distributions. 
Its strength stands on the fact that, once the expansion is computed, the Parseval formula gives directly all the ANOVA indices (in particular the total Sobol' indices) \cite{sud08a,cremar09}. 
Of course in practice the PC expansion is truncated.
An obvious but important fact is that this truncated PC expansion provides a lower bound for the true ANOVA index. 
In this paper, we consider general tensor Hilbert basis called generalized chaos (GC). 
Further, we use the previous trick to produce general lower bounds (see Section \ref{sec:GC_expansions}). 
Then, a smart choice of the GC produces new interesting lower bounds involving the derivatives of the function of interest (see Section \ref{sec:DO_expansions}). More precisely, this special Hilbert basis is obtained by diagonalizing the Poincar\'e differential operators (PDO), associated with the input distributions (this operator is related to Poincar\'e inequality, see \cite{BGL_book} or \cite{Bonnefont_Joulin_Ma_weighted_Poincare}). Notice that other special GC expansions based on the diagonalization of reproducing kernels has been recently studied and used for global sensitivity purposes in  \cite{pro19}.

In general, the estimation of the total Sobol' indices (and other ANOVA indices) suffers from the curse of dimensionality (number of inputs) and can be too costly in terms of number of model evaluations \cite{pritar17}.
Low-cost computations of upper and lower bounds for total Sobol' indices are then very useful.
DGSM (Derivative-based Global Sensitivity Measures, see \cite{sobkuc09}), computed from some integral of the squared derivatives of the model output, may give such economical upper and lower bounds \cite{kucrod09,Kucherenko_Iooss_book}. 
Indeed, in many physical models the so-called adjoint method allows at weak extra cost the evaluation of the derivatives of the model (see for example the recent review \cite{Allaire_2015}).
Concerning the upper bounds, optimal and general (for any distribution type of the input) results are obtained in \cite{Roustant_Barthe_Iooss_2017}.
For lower bounds, only special cases (uniform, Normal and Gamma) have been investigated in \cite{Sudret_Mai_2015, Kucherenko_Song_2016} (see \cite{Kucherenko_Iooss_book} for a review).
The bounds given in \cite{Kucherenko_Song_2016} are quite rough as they are smaller than the first-order Sobol' indices.
In our work, we follow the tracks opened by \cite{Sudret_Mai_2015} using PC expansions, but for both much more general distributions and expansions. Indeed, for a wide class of input distributions the PDO generalized chaos expansion leads naturally to quantities built on the derivatives.

Notice that the diagonalization of PDO used here, 
leads to orthogonal polynomial only for the Gaussian distribution (see \cite{BGL_book} and \cite{bakry2003characterization}). Indeed, the PDO considered in this paper 
only involves the integration with respect to the input distribution of the squared derivatives 
(and not a reweighted input distribution).   
Apart from this particular probability distribution,
orthogonal polynomials cannot be interpreted, in general, as eigenfunctions of a PDO. 
Consequently, in general the Hilbert basis built by diagonalizing a PDO is not a
polynomial basis.
For example, for the uniform distribution, it is the Fourier basis.

The paper is structured as follows.
Section~\ref{sec:background} recalls the required mathematical tools for global sensitivity analysis (ANOVA decomposition and DGSM).
Section~\ref{sec:GC_expansions} rephrases the ANOVA decomposition with Hilbert spaces, and introduces the generalized chaos expansion.
Section~\ref{sec:DO_expansions} then focuses on PDO expansions, and their link to PC expansions.
Section~\ref{sec:weight_free_DGSM} gives an alternative proposition of orthonormal functions which lead to weight-free DGSM. Section~\ref{sec:examples} gives analytical examples.
Section~\ref{sec:applications} illustrates on real life applications.
Section~\ref{sec:conclusion} gives some perspectives for future works.


\section{Background on sensitivity analysis}  \label{sec:background}
To begin with, let
$X = (X_1, \dots, X_d)$ denotes the vector of independent input variables  
with distribution $\mu = \mu_1 \otimes \dots \otimes \mu_d$.
Here the $\mu_i$'s are continuous probability measures on $\R$.
Let further $h$ be a multivariate function of interest $h: \Delta \subseteq \mathbb{R}^d \rightarrow \mathbb{R}$. 
We assume that $h(X) \in \mathcal{H} := L^2(\mu)$.

One of the main tool in global sensitivity analysis is the Sobol'-Hoeffding decomposition of $h$,
(see \cite{Hoeffding_1948, Efron_Stein_1981, Antoniadis_1984, Sobol_1993}).
It provides a unique expansion of $h$ as
$$h(X) = h_0 + \sum_{i=1}^d h_i(X_i) + \sum_{1 \leq i<j \leq d} h_{i,j}(X_i, X_j) 
+ \dots + h_{1, \dots, d}(X_1, \dots, X_d) \nonumber$$
with $\E [h_I(X_I)\vert X_J]=0$ for all $I \subseteq \{1, \dots, d\}$ and all $J \subsetneq I$ 
(with the notation $\mbox{$X_I:=(X_i:\;i\in I)$})$. 
Furthermore, $h_0 = \E[h(X)]$ and
$$h_I(X_I) = \E[h(X) \vert X_I] - \sum_{J \subsetneq I} h_J(X_J) 
= \sum_{J \subseteq I} (-1)^{\vert I \vert - \vert J \vert} \E[h(X) \vert X_J].$$ 
Notice that the condition
$$\E [h_I(X_I)\vert X_J]=0 \quad \textrm{for all} \quad J \subsetneq I,$$
warrants both the uniqueness of the decomposition and the orthogonality of $h_I(X_I)$
to any square integrable random variables depending only on $X_J$ with $J \cap I \subsetneq I$. 

This last property leads to the so-called ANOVA decomposition for the variance of $h(X)$

\begin{equation}
D := \Var(h(X)) = \sum_{I \subseteq \{1, \dots, d \}} \Var (h_I(X_I)).
\label{eq:DD} 
\end{equation}

Notice further that the Sobol'-Hoeffding decomposition is a particular case of the multivariate decomposition built on 
a  finite family of commuting projectors $P_1, \dots, P_d$ and obtained by expanding the following product (see \cite{Kuo_et_al_2010}),
\begin{eqnarray*}
I_d &=& (P_1 + (I_d - P_1)) \dots (P_d + (I_d - P_d))\\
 &=& \sum_{I \subseteq \{1, \dots, d \}} \underset{{\Pi_I}}{\underbrace{\prod_{j \notin I} P_j \prod_{k \in I} (I - P_k)}}.
\end{eqnarray*}
Obviously, $\Pi_I$ is also a projector.
In the Sobol'-Hoeffding decomposition the projection $P_j h$ is  
$\int h(x) d\mu_j(x_j)$.

In sensitivity analysis, one classically considers the 
Sobol' indices. These indices are defined, for $I\subseteq \{1, \dots, d\}$, as $S_I = D_I / D$ where $D_I:= \Var (h_I(X_I))$. From (\ref{eq:DD}) one directly obtains
$$D = \sum_I D_I, \qquad \qquad 1 = \sum_I S_I.$$ 
Another interesting index is the total Sobol' one that includes all the contributions on the total variance of a variable group. In this paper, the total index associated to one variable is the object under study. For $\mbox{$I\subseteq \{1, \dots, d\}$}$, the total Sobol' index associated to $I$ is  defined as  $S_I^\tot :=  \frac{D_I^\tot}{D}$ with 
$$\mbox{$D_I^\tot :=  \sum_{J \supseteq \{ I \}}D_I$}.$$

To end this section, we recall the other popular global sensitivity index that will appear in our bounds. This is the so-called  Derivative Global Sensitivity Measure (DGSM) introduced and studied in \cite{Sobol_Gresham_1995} and \cite{kucrod09}. It is defined, for $I\subseteq \{1, \dots, d\}$, under smoothness and integrability assumptions on $h$ as 
$$\nu_I = \int \left(\frac{\partial^{\vert I \vert} h(x)}{\partial x_I}\right)^{2}\mu(dx).$$
 

\section{Generalized chaos expansions}   \label{sec:GC_expansions}
In order to present the generalized chaos expansions, 
it is convenient to first rephrase the classical functional ANOVA decomposition 
presented in the previous section as a Hilbert space decomposition.
The next proposition is devoted to this task.
In particular, we emphasize that the operator giving one ANOVA term is an orthogonal projection.
Then, we discuss the construction of Hilbert basis tailored to ANOVA decomposition. 
Part of the material is inspired from \cite{tissotThese} and \cite{Antoniadis_1984}.


\begin{prop}[Hilbert space decomposition for ANOVA]
For all subset $I$ of $\{1, \dots, d\}$, the map $\Pi_I: h \in \Hilb \mapsto h_I$ is an orthogonal projection.
The image spaces ${\Hilb_I = \Pi_I(\Hilb) = \{h \in \Hilb, \, h=h_I \}}$, called \emph{ANOVA spaces}, 
are Hilbert spaces that form an orthogonal decomposition of $\Hilb$:
\begin{equation} \label{eq:HilbertDecomposition}
\Hilb = \underset{I \subseteq \{1, \dots, d \}}{\overset{\perp}{\oplus}} \Hilb_I
\end{equation}
\end{prop}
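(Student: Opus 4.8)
The plan is to deduce everything from the structure recalled just before the statement, namely that $\Hilb = L^2(\mu)$, that the commuting projectors are the operators $P_j\colon h\mapsto \int h\,d\mu_j(x_j)$, and that $I_d = \sum_{I\subseteq\{1,\dots,d\}}\Pi_I$ with $\Pi_I = \prod_{j\notin I}P_j\prod_{k\in I}(I_d-P_k)$. First I would record two elementary facts about the $P_j$. (i) Each $P_j$ is an orthogonal projection on $\Hilb$: it is the conditional expectation $h\mapsto \E[h(X)\mid X_i,\ i\neq j]$, hence linear, idempotent, self-adjoint and a contraction on $L^2(\mu)$, with range the closed subspace of (classes of) functions not depending on $x_j$. (ii) The $P_j$ pairwise commute: since $\mu = \mu_1\otimes\cdots\otimes\mu_d$ is a product measure and $h\in L^2(\mu)\subseteq L^1(\mu)$, Fubini–Tonelli gives $P_jP_kh = \int\!\!\int h\,d\mu_j\,d\mu_k = P_kP_jh$. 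Consequently $I_d-P_k$ commutes with every $P_j$ and with every $I_d-P_j$, and, being $I_d$ minus an orthogonal projection, is itself an orthogonal projection.

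Next I would use the fact that a finite product of pairwise commuting orthogonal projections is again an orthogonal projection: commutativity lets one rearrange the factors, so the product is idempotent, and passing to the adjoint reverses the order of the (self-adjoint) factors, which commutativity restores, so the product is self-adjoint. Applying this to $\Pi_I$, which by (i)–(ii) is a product of the pairwise commuting orthogonal projections $\{P_j : j\notin I\}\cup\{I_d-P_k : k\in I\}$, shows that $\Pi_I$ is an orthogonal projection. In particular $\Pi_I$ is bounded with closed range, so $\Hilb_I := \Pi_I(\Hilb)$ is a closed subspace of $\Hilb$ and hence a Hilbert space for the restricted inner product; and the standard fixed-point description of the range of a projection gives $\Hilb_I = \{h\in\Hilb : \Pi_I h = h\} = \{h\in\Hilb : h = h_I\}$, matching the statement.

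For the orthogonal decomposition I would argue in two steps. Orthogonality: if $I\neq J$, pick an index $k$ in the symmetric difference, say $k\in I\setminus J$; then $\Pi_I$ carries the factor $I_d-P_k$ while $\Pi_J$ carries the factor $P_k$ (since $k\notin J$), and, sliding commuting factors past one another, $\Pi_I\Pi_J$ contains the block $(I_d-P_k)P_k = P_k - P_k^2 = 0$, so $\Pi_I\Pi_J = 0$ (and symmetrically $\Pi_J\Pi_I = 0$). Hence for $u\in\Hilb_I$ and $v\in\Hilb_J$ one has $\langle u,v\rangle = \langle u,\Pi_J v\rangle = \langle \Pi_J u,v\rangle = \langle \Pi_J\Pi_I u,v\rangle = 0$, using self-adjointness of $\Pi_J$ and $u=\Pi_I u$. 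Spanning: expanding the product $I_d = \prod_{j=1}^d\bigl(P_j + (I_d-P_j)\bigr) = \sum_{I\subseteq\{1,\dots,d\}}\Pi_I$, as recalled in the text, shows that every $h\in\Hilb$ equals $\sum_I \Pi_I h = \sum_I h_I$ with $h_I\in\Hilb_I$; together with the pairwise orthogonality this gives $\Hilb = \underset{I\subseteq\{1,\dots,d\}}{\overset{\perp}{\oplus}}\Hilb_I$.

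I expect the only genuinely substantive point to be fact (ii), the commutation of the $P_j$, since it is precisely where the independence hypothesis $\mu = \mu_1\otimes\cdots\otimes\mu_d$ enters and it simultaneously underpins the idempotency and self-adjointness of $\Pi_I$ and the annihilation identities $\Pi_I\Pi_J = 0$; everything else is routine bookkeeping with finite products of commuting orthogonal projections.
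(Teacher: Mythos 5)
Your proof is correct, but it follows a genuinely different route from the paper's. The paper works directly with the probabilistic properties of the Sobol'--Hoeffding decomposition: idempotency of $\Pi_I$ is obtained by applying the decomposition twice, self-adjointness by expanding $h=\sum_J h_J$ and using the non-overlap/uniqueness property to kill every cross term $\E\bigl(g_I(X_I)h_J(X_J)\bigr)$ with $J\neq I$, so that $\langle \Pi_I g,h\rangle=\E\bigl(g_I(X_I)h_I(X_I)\bigr)=\langle g,\Pi_I h\rangle$; the orthogonal sum decomposition of $\Hilb$ is then read off from the existence and uniqueness of the ANOVA decomposition. You instead work entirely on the operator side of the commuting-projector representation recalled at the end of Section 2: each $P_j$ is a conditional expectation, hence an orthogonal projection, the $P_j$ commute because $\mu$ is a product measure, and then everything --- idempotency and self-adjointness of $\Pi_I$, the annihilation $\Pi_I\Pi_J=0$ for $I\neq J$, and the resolution of the identity $I_d=\sum_I\Pi_I$ --- is elementary algebra of finitely many commuting orthogonal projections. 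Your argument is more self-contained, in that it does not presuppose the orthogonality and uniqueness facts about the ANOVA components but effectively re-derives them from the product structure of $\mu$; the paper's proof is shorter precisely because it leans on those facts, already stated in Section 2. The one point you take for granted is the identification of the proposition's map $h\mapsto h_I$ with the operator $\prod_{j\notin I}P_j\prod_{k\in I}(I_d-P_k)$: the paper asserts this (same notation $\Pi_I$), and it is checked in one line by expanding $\prod_{k\in I}(I_d-P_k)$ by inclusion--exclusion, which recovers the formula $h_I=\sum_{J\subseteq I}(-1)^{\vert I\vert-\vert J\vert}\E[h(X)\mid X_J]$ given in Section 2; adding that sentence would make your argument fully complete.
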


\begin{proof}
First, $\Pi_I$ is a projector since applying twice the ANOVA decomposition leaves it unchanged.
Now, let $g, h \in \Hilb$. We have:
$$\langle \Pi_I g, h \rangle = \E(g_I(X_I) h(X)) 
= \sum_{J \subseteq \{1, \dots, d\}} \E(g_I(X_I) h_J(X_J))$$
where we wrote the ANOVA decomposition of $h$. 
Now, if $J \neq I$, then $I \cap J \subsetneq I$ or $I \cap J \subsetneq J$, thus $\E(g_I(X_I) h_J(X_J)) = 0$
by the uniqueness property of ANOVA decomposition.
Hence, 
$$ \langle \Pi_I g, h \rangle = \E(g_I(X_I) h_I(X_I)) = \langle g, \Pi_I h \rangle, $$
which proves that the projector $\Pi_I$ is self-adjoint, and thus orthogonal.

Consequently, $\Pi_I$ is continuous and $\Hilb_I$ is a Hilbert space as a closed subspace of $\Hilb$.
The direct sum (\ref{eq:HilbertDecomposition}) results from the existence and uniqueness of ANOVA decomposition.
As shown above, the uniqueness property implies that $\Hilb_I \perp \Hilb_J$ if $I \neq J$.
\end{proof}

\begin{cor}[Hilbert space decomposition for total effects]
Let $I$ be a subset of $\{1, \dots, d\}$.
Then the map $\Pi_I^\tot: h \in \Hilb \mapsto h_I^\tot = \sum_{J \supseteq I} h_J$ is an orthogonal projection.
The image space ${\Hilb_I^\tot = \Pi_I^\tot(\Hilb) = \{h \in \Hilb, \, h=h_I^\tot \}}$ is the Hilbert space
\begin{equation} \label{eq:HilbertTot}
\Hilb_I^\tot = \underset{J \supseteq I}{\overset{\perp}{\oplus}} \Hilb_J.
\end{equation}
\end{cor}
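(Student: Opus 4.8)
The plan is to deduce the corollary from the preceding proposition by exploiting the orthogonal decomposition $\Hilb = \overset{\perp}{\oplus}_{I} \Hilb_I$ already established. Observe that $\Pi_I^\tot = \sum_{J \supseteq I} \Pi_J$, a finite sum of the orthogonal projections $\Pi_J$. First I would note that the ANOVA spaces $\Hilb_J$ are pairwise orthogonal and that $\Pi_J$ is the orthogonal projection onto $\Hilb_J$ (this is exactly the content of the proposition). A finite sum of orthogonal projections onto pairwise orthogonal closed subspaces is itself the orthogonal projection onto the (closed) direct sum of those subspaces; this is a standard Hilbert space fact, but since it is short I would include the verification: idempotency follows from $\Pi_J \Pi_{J'} = 0$ for $J \neq J'$ (because $\Hilb_{J'} \subseteq \ker \Pi_J$ when $J \neq J'$, again by uniqueness of the ANOVA decomposition), so $(\Pi_I^\tot)^2 = \sum_{J, J' \supseteq I} \Pi_J \Pi_{J'} = \sum_{J \supseteq I} \Pi_J = \Pi_I^\tot$; and self-adjointness is immediate since each $\Pi_J$ is self-adjoint and the sum is finite.

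Next I would identify the image space. Since $\Pi_I^\tot$ is an orthogonal projection, $\Hilb_I^\tot := \Pi_I^\tot(\Hilb)$ is a closed subspace and $\Pi_I^\tot$ is the orthogonal projection onto it. To see that $\Hilb_I^\tot = \overset{\perp}{\oplus}_{J \supseteq I} \Hilb_J$, I would argue by double inclusion: any $h$ in the image is $\Pi_I^\tot h = \sum_{J \supseteq I} h_J$, which lies in $\sum_{J \supseteq I} \Hilb_J$; conversely each $\Hilb_J$ with $J \supseteq I$ is contained in the image because $\Pi_I^\tot$ fixes $\Hilb_J$ pointwise (for $g \in \Hilb_J$, $\Pi_I^\tot g = \sum_{J' \supseteq I}\Pi_{J'} g = \Pi_J g = g$ since $\Pi_{J'} g = 0$ for $J' \neq J$). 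The sum is orthogonal and already closed because it is a finite orthogonal sum of closed subspaces, so no completion is needed.

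Finally I would record the characterization $\Hilb_I^\tot = \{h \in \Hilb : h = h_I^\tot\}$ as the fixed-point set of $\Pi_I^\tot$, which holds for any projection: $h$ is in the image iff $\Pi_I^\tot h = h$. This matches the displayed definition $h = h_I^\tot = \sum_{J \supseteq I} h_J$ once one writes out $\Pi_I^\tot h$.

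I do not anticipate a genuine obstacle here; the only mild subtlety is making sure the index set in the definition of $h_I^\tot$ is $\{J : J \supseteq I\}$ (all supersets of $I$, including $I$ itself) and matching this with the total-index notation $D_I^\tot = \sum_{J \supseteq I} D_J$ from Section~\ref{sec:background}. The proof is essentially a bookkeeping exercise on top of the proposition, with the one reusable idea being that finite sums of mutually orthogonal projections are again orthogonal projections.
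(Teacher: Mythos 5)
Your proof is correct and follows essentially the same route as the paper: write $\Pi_I^\tot = \sum_{J \supseteq I} \Pi_J$, deduce that it is an orthogonal projection from the properties of the $\Pi_J$ established in the proposition, and identify the image with $\overset{\perp}{\oplus}_{J \supseteq I} \Hilb_J$. In fact your verification via $\Pi_J \Pi_{J'} = 0$ for $J \neq J'$ is the right justification (and slightly more careful than the paper's appeal to the $\Pi_J$ merely commuting, which alone would not suffice), and your double-inclusion argument simply spells out what the paper dismisses as straightforward.
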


\begin{proof}
Observe that $\Pi_I^\tot = \sum_{J \supseteq I} \Pi_J$. 
As the $\Pi_J$ are commuting orthogonal projections, $\Pi_I^\tot$ is an orthogonal projection.
The remainder is straightforward.
\end{proof}


We now exhibit Hilbert bases of $\Hilb$ that are adapted to the ANOVA decomposition, 
in the sense that each element belongs to one ANOVA space $\Hilb_I$. 
This provides Hilbert bases for all $\Hilb_I$ and $\Hilb_I^\tot$.

\begin{defi}[Generalized chaos] \label{def:chaosBases}
For $i = 1, \dots, d$, let $(e_{i,n})_{n \in \N}$ be a Hilbert basis of $L^2(\mu_i)$, with $e_{i,0} = 1$.
For a multi-index $\multi = (\ell_1, \dots, \ell_d) \in \N^d$, 
the \emph{generalized chaos of order $\multi$} is defined as the following $L^2(\mu)$ function:
$$ e_\multi(x) := \left( \underset{i=1, \dots, d} \otimes e_{i, \ell_i} \right) (x) 
=  e_{1, \ell_1}(x_1) \times \dots \times e_{d, \ell_d}(x_d).$$
\end{defi}

The so-called polynomial chaos introduced by \cite{wiener1938homogeneous}, 
built with the orthogonal polynomials associated to the Gaussian distribution (Hermite polynomials $(H_n)$),
is a special case of the previous definition (with $e_{i,n} = H_n$).
Similarly, this is also the case for the generalized polynomial chaos
corresponding to orthogonal polynomials associated to other probability distributions.
For history on polynomial chaos and generalized polynomial chaos, 
we refer to the introduction of \citep{ernst2012convergence}.
Other examples of generalized chaos in the context of sensitivity analysis are the Fourier bases, 
investigated in \cite{cukier1978nonlinear},
and the Haar systems originally used by Sobol' \cite{sobol1969multidimensional}.

\begin{prop} \label{prop:HilbertBasis}
$\;$
\begin{enumerate}
\item The whole set of generalized chaos $\T := (e_\multi)_{\multi \in \N^d}$ is a Hilbert basis of $\Hilb$, 
and each $e_\multi$ belongs to (exactly) one $\Hilb_I$, 
where $I$ is the set containing the indices of active variables:  
$I = \{i \in \{1, \dots, d\}: \, \ell_i \geq 1 \}$.
\item For all $I \subseteq \{1, \dots, d\}$, 
\begin{itemize}
\item The subset of basis functions that involve \emph{exactly} the variables in $I$,
$\T_I := \{e_\multi, \, 
\mbox{ with }  \ell_i \geq 1 \mbox{ if } i \in I 
\mbox{ and }  \ell_i = 0 \mbox{ if } i \notin I
\}$ 
is a Hilbert basis of $\Hilb_I$.
\item The subset of basis functions that involve \emph{at least} the variables in $I$,
$\T_I^\tot := \{e_\multi, \, 
\mbox{ with }  \ell_i \geq 1 \mbox{ if } i \in I 
\}$ 
is a Hilbert basis of $\Hilb_I^\tot$. 
\end{itemize}
\end{enumerate}
\end{prop}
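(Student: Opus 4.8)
The plan is to prove the two parts in sequence, with Part 1 doing most of the work. First I would establish that $\T = (e_\multi)_{\multi \in \N^d}$ is a Hilbert basis of $\Hilb = L^2(\mu)$. This is the standard fact that a tensor product of Hilbert bases of the factors $L^2(\mu_i)$ is a Hilbert basis of the tensor product space $L^2(\mu_1) \otimes \dots \otimes L^2(\mu_d) = L^2(\mu)$ (using that $\mu = \mu_1 \otimes \dots \otimes \mu_d$); orthonormality of the $e_\multi$ follows from Fubini and orthonormality of each $(e_{i,n})_n$, and completeness follows from the completeness of each factor system, e.g. by approximating first product functions $f_1(x_1)\cdots f_d(x_d)$ and then using that their linear span is dense in $L^2(\mu)$. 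I would cite this rather than belabor it.

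Next, for the assignment of each $e_\multi$ to an ANOVA space, let $I = \{i : \ell_i \geq 1\}$. I would check directly that $e_\multi \in \Hilb_I$, i.e. that $e_\multi = (e_\multi)_I$, by verifying the characterizing condition $\E[e_\multi(X) \mid X_J] = 0$ for every $J \subsetneq I$. Indeed, because $e_\multi$ is a product over coordinates and $\mu$ is a product measure, conditioning on $X_J$ integrates out the coordinates outside $J$: $\E[e_\multi(X) \mid X_J] = \prod_{i \in J} e_{i,\ell_i}(X_i) \cdot \prod_{i \notin J} \E[e_{i,\ell_i}(X_i)]$. For $i \in I \setminus J$ (nonempty since $J \subsetneq I$) we have $\ell_i \geq 1$, so $\E[e_{i,\ell_i}(X_i)] = \langle e_{i,\ell_i}, e_{i,0}\rangle_{L^2(\mu_i)} = 0$ by orthonormality. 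Hence the conditional expectation vanishes, and since $e_\multi$ depends only on $X_I$, the uniqueness characterization of the ANOVA decomposition gives $e_\multi \in \Hilb_I$. Exactness (that this $I$ is the unique such index) is immediate: if $i \notin I$ then $e_{i,\ell_i} = e_{i,0} = 1$, so $e_\multi$ genuinely does not depend on $x_i$, while for $i \in I$ one sees $e_\multi$ does depend on $x_i$; alternatively, membership in two different $\Hilb_I$'s is impossible by the orthogonality $\Hilb_I \perp \Hilb_{I'}$ from the preceding proposition (a nonzero vector cannot lie in two orthogonal subspaces), and $e_\multi \neq 0$.

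For Part 2, with $\T = \bigsqcup_I \T_I$ a partition of the basis $\T$ according to the active-variable set, and with $\Hilb = \overset{\perp}{\oplus}_I \Hilb_I$ the orthogonal decomposition, the fact that $\T_I \subseteq \Hilb_I$ for each $I$ forces $\T_I$ to be exactly a Hilbert basis of $\Hilb_I$: any $h \in \Hilb_I$ expands in $\T$, but its coefficients against $e_\multi \in \T_{I'}$ with $I' \neq I$ vanish by orthogonality of $\Hilb_{I'}$ and $\Hilb_I$, so $h$ is in the closed span of $\T_I$; combined with orthonormality of $\T_I$ (inherited from $\T$), this is the claim. The statement for $\T_I^\tot$ then follows by taking the (orthogonal) union over $J \supseteq I$, since $\T_I^\tot = \bigsqcup_{J \supseteq I} \T_J$ and $\Hilb_I^\tot = \overset{\perp}{\oplus}_{J \supseteq I} \Hilb_J$ by the corollary on total effects.

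The only mild subtlety — the "main obstacle," such as it is — is being careful that the partition of $\T$ genuinely refines the orthogonal decomposition of $\Hilb$ with nothing left over: one must invoke both that every $e_\multi$ lands in some $\Hilb_I$ (Part 1) and that $\T$ is complete in $\Hilb$, so that no ANOVA space is "missed." Everything else is routine bookkeeping with product measures and orthonormality.
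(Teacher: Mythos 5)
Your proposal is correct and follows essentially the same route as the paper: the tensor-product Hilbert basis fact is cited as standard, membership $e_\multi \in \Hilb_I$ is verified via the vanishing conditional expectations $\E[e_\multi(X)\mid X_J]=0$ for $J \subsetneq I$ using $\E[e_{i,\ell_i}(X_i)]=0$, and the basis property of $\T_I$ (and $\T_I^\tot$) follows by expanding $h \in \Hilb_I$ in $\T$ and killing the off-$\T_I$ coefficients by orthogonality of the ANOVA spaces. Your extra remarks (exactness of $I$, assembling $\T_I^\tot$ as the union over supersets $J \supseteq I$) are harmless minor elaborations of what the paper dismisses as "similar."
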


Notice that in the definition of $\T_I$ and $\T_I^\tot$, 
the index $n_i$ 
is non zero, which means that $x_i$ is active.

\begin{proof}
The fact that $\T$ is a Hilbert basis of $\Hilb$ is well known.
Let us see that $e_\multi$ belongs to $\Hilb_I$, with $I = \{i \in \{1, \dots, d\}: \, \ell_i \geq 1 \}$.
For that, we need to check that the ANOVA decomposition of $e_\multi$ 
consists of only one non-zero term corresponding to the subset $I$ and equal to $e_\multi$.
As $e_\multi$ is a function of $x_I$, 
it remains to check the non-overlapping condition. 
Let $J$ be a strict subset of $I$ (possibly empty). 
Then, 
$$ \E \left[ \prod_{i \in I} e_{i, \ell_i}(X_I) \vert X_J \right] =  
\prod_{j \in J} e_{j, \ell_j}(X_j) \prod_{i \in I \setminus J} \E \left[ e_{i, \ell_i}(X_I) \right] $$
Let us choose $i \in I \setminus J$. 
Then, $\ell_i \geq 1$, implying that $\E \left[ e_{i, \ell_i}(X_I) \right] = 0$
(as $e_{i, \ell_i}$ is orthogonal to $e_{i, 0} = 1$). 
Finally $e_\multi$ belongs to $\Hilb_I$.
Now let us fix a subset $I$ of $\{1, \dots, d\}$, and consider for instance $\T_I$
(the proof is similar for $\T_I^\tot$).
Clearly, as a subset of $\T$, the set $\T_I$ is a collection of orthonormal functions.
Furthermore, by the proof above, each $e_\multi$ of $\T_I$ belongs to $\Hilb_I$. 
To see that $\T_I$ is dense in $\Hilb_I$, let us choose $h \in \Hilb_I$.
Since $\T$ is a Hilbert basis of $\Hilb$, then $h$ can be written as
$$ h = \sum_{\multi \in \N^d} c_\multi e_\multi 
= \sum_{e_\multi \in \T_I} c_\multi e_\multi + \sum_{e_\multi \notin \T_I} c_\multi e_\multi$$
where $(c_\multi)_{\multi \in \N^d}$ is a squared integrable sequence of real numbers.
Recall that each $e_\multi$ belongs to $\Hilb_J$,
with $J = \{i \in \{1, \dots, d\} \, s.t. \, \ell_i \geq 1 \}$.
Thus, if $e_\multi \notin \T_I$, then $J \neq I$.
Hence, $e_\multi \in \Hilb_I^\perp$ (as $\Hilb_J \perp \Hilb_I$). 
Since $h \in \Hilb_I$, it implies that 
$ \sum_{e_\multi \notin \T_I} c_\multi e_\multi = 0$.
\end{proof}

The previous results imply that the variance $D_I$ (resp. $D_I^\tot$) of the output 
explained by a set $I$ (resp. supersets of $I$) of input variables,
is equal to the squared norm of the orthogonal projection onto $\Hilb_I$ (resp. $\Hilb_I^\tot$).
Hence, lower bounds can be obtained by projecting onto smaller subspaces.

\begin{cor} \label{prop:LB_general}
Let $I$ be a subset of $\{1, \dots, d\}$ and let $h \in \Hilb$. Then:
\begin{itemize}
\item For all subset $G$ of $\Hilb_I$, $D_I = \Vert \Pi_I (h) \Vert ^2 \geq \Vert \Pi_G(h) \Vert ^2$, 
with equality iff $h$ has the form $h = g + f$ with $g \in G$ and $f \in \Hilb_I^\perp$
\item For all subset $G$ of $\Hilb_I^\tot$, $D_I^\tot = \Vert \Pi_I^\tot (h) \Vert ^2 \geq \Vert \Pi_G(h) \Vert ^2$,
with equality iff $h$ has the form $h = g + f$ with $g \in G$ and $f \in (\Hilb_I^\tot)^\perp$
\end{itemize}
\end{cor}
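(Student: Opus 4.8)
The plan is to deduce this corollary directly from Proposition~\ref{prop:HilbertBasis} together with the elementary geometry of orthogonal projections in Hilbert spaces. The two bullets are proved by the same argument, so I would treat the first one in detail and remark that the second is identical with $\Hilb_I$ replaced by $\Hilb_I^\tot$ (which is legitimate since the Corollary to the first Proposition shows $\Hilb_I^\tot$ is a genuine Hilbert subspace and $\Pi_I^\tot$ its orthogonal projection).

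First I would record the identity $D_I = \Vert \Pi_I(h) \Vert^2$. This is where Proposition~\ref{prop:HilbertBasis} enters: expanding $h$ in the generalized chaos basis $\T$ and grouping terms, $\Pi_I(h) = h_I$ is exactly the sum of the coefficients times the basis functions lying in $\T_I$, so by Parseval $\Vert \Pi_I(h)\Vert^2 = \sum_{e_\multi \in \T_I} c_\multi^2 = \Var(h_I(X_I)) = D_I$ (using $\E[h_I(X_I)] = 0$ for nonempty $I$, and the trivial case $I = \emptyset$ handled separately). Next, for a (closed) subspace $G \subseteq \Hilb_I$, I would use the standard fact that when $G \subseteq \Hilb_I$ one has $\Pi_G = \Pi_G \Pi_I$, hence $\Pi_G(h) = \Pi_G(\Pi_I h)$ and therefore $\Vert \Pi_G(h)\Vert = \Vert \Pi_G(\Pi_I h)\Vert \le \Vert \Pi_I h\Vert$, since any orthogonal projection is norm-nonincreasing. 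That gives the inequality $D_I \ge \Vert \Pi_G(h)\Vert^2$.

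For the equality case I would argue as follows. Write $h = \Pi_I h + (h - \Pi_I h)$ with the second summand in $\Hilb_I^\perp$; since $G \subseteq \Hilb_I$, $\Pi_G$ kills $\Hilb_I^\perp$ and $\Pi_G h = \Pi_G(\Pi_I h)$. Now $\Pi_I h \in \Hilb_I$, and for the orthogonal projection $\Pi_G$ on $\Hilb_I$ we have $\Vert \Pi_G(\Pi_I h)\Vert = \Vert \Pi_I h\Vert$ if and only if $\Pi_I h \in G$ (the Pythagorean identity $\Vert \Pi_I h\Vert^2 = \Vert \Pi_G(\Pi_I h)\Vert^2 + \Vert \Pi_I h - \Pi_G(\Pi_I h)\Vert^2$ forces the complementary component to vanish). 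Equivalently, $h = \Pi_I h + (h - \Pi_I h)$ with $\Pi_I h =: g \in G$ and $h - \Pi_I h =: f \in \Hilb_I^\perp$, which is precisely the stated form; conversely if $h = g + f$ with $g \in G \subseteq \Hilb_I$ and $f \in \Hilb_I^\perp$, then by uniqueness of the orthogonal decomposition $\Pi_I h = g$ and $\Pi_G h = g$, so the norms agree. One should be slightly careful that the Corollary speaks of ``subsets'' $G$; I would note at the outset that only the closed linear span of $G$ matters, or simply read $G$ as a closed subspace, since $\Vert \Pi_G(h)\Vert$ is only meaningful for such $G$.

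I do not expect a serious obstacle here — the statement is essentially a repackaging of ``orthogonal projections shrink norms, with equality iff the vector already lies in the target subspace,'' layered on top of the basis description from Proposition~\ref{prop:HilbertBasis}. The only mild subtlety is being explicit that $G \subseteq \Hilb_I$ is what makes $\Pi_G \Pi_I = \Pi_G$ hold (it would fail for a general $G$), and tracking the trivial $I = \emptyset$ case where $D_\emptyset = h_0^2$; both are bookkeeping rather than real difficulties.
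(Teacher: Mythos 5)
Your proof is correct and follows the route the paper intends: the paper states this corollary without a separate proof, treating it as immediate from the identity $D_I=\Vert \Pi_I(h)\Vert^2$ together with the fact that orthogonal projection onto a smaller closed subspace of $\Hilb_I$ (resp.\ $\Hilb_I^\tot$) contracts the norm, with the equality case settled by the Pythagorean identity exactly as you write, including your reading of $G$ as a closed subspace. One immaterial quibble: with the paper's definition $D_I=\Var(h_I(X_I))$ one has $D_\emptyset=0$ rather than $h_0^2$, but this degenerate case is outside the corollary's intended scope and does not affect your argument for nonempty $I$.
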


In practice, the subset $G$ on which to project may be finite dimensional.
For instance, it can be chosen by picking a finite number of orthonormal functions 
from the Hilbert basis obtained in Proposition~\ref{prop:HilbertBasis}.
We illustrate this on the common case where $I$ correspond to a single variable.
Without loss of generality, we assume that $I = \{1\}$. 

\begin{cor} \label{prop:LB_general_1var}
Let $\phi_1, \dots, \phi_N$ be orthonormal functions in $\Hilb_1^\tot$. Then:
$$ D_1^\tot(h) \geq \sum_{n=1}^N \left( \int h(x) \phi_n(x) \mu(dx) \right)^2$$
with equality iff $h$ has the form
$h(x) = \sum_{n=1}^N \alpha_n \phi_n(x) + g(x_2, \dots, x_N)$,
where $g \in L^2(\underset{i=2, \dots, d} \otimes \mu_i)$.
Furthermore, if all the $\phi_j$'s belong to $\Hilb_1$, then the lower bound holds for $D_1$.
\end{cor}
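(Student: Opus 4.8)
The plan is to derive Corollary~\ref{prop:LB_general_1var} as a direct specialization of Corollary~\ref{prop:LB_general}. First I would take $G$ to be the finite-dimensional subspace $G = \mathrm{span}(\phi_1, \dots, \phi_N) \subseteq \Hilb_1^\tot$. Since the $\phi_n$ are orthonormal, the orthogonal projection onto $G$ is simply $\Pi_G(h) = \sum_{n=1}^N \langle h, \phi_n \rangle \phi_n$, so $\Vert \Pi_G(h) \Vert^2 = \sum_{n=1}^N \langle h, \phi_n \rangle^2 = \sum_{n=1}^N \left( \int h(x) \phi_n(x) \mu(dx) \right)^2$. Plugging this into the second item of Corollary~\ref{prop:LB_general} immediately gives the inequality $D_1^\tot(h) \geq \sum_{n=1}^N \left( \int h(x) \phi_n(x) \mu(dx) \right)^2$. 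For the last sentence of the statement, if all the $\phi_j$ lie in $\Hilb_1$ then $G \subseteq \Hilb_1$, and the first item of Corollary~\ref{prop:LB_general} gives the analogous bound for $D_1$.

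Next I would translate the equality condition. By Corollary~\ref{prop:LB_general}, equality holds iff $h = g_0 + f$ with $g_0 \in G$ and $f \in (\Hilb_1^\tot)^\perp$. Writing $g_0 = \sum_{n=1}^N \alpha_n \phi_n$ accounts for the first piece. The point then is to identify $(\Hilb_1^\tot)^\perp$ explicitly: by the Corollary on Hilbert space decomposition for total effects, $\Hilb_1^\tot = \overset{\perp}{\oplus}_{J \ni 1} \Hilb_J$, so its orthogonal complement is $\overset{\perp}{\oplus}_{J \not\ni 1} \Hilb_J$, which is exactly the subspace of functions in $\Hilb = L^2(\mu)$ that do not depend on $x_1$, i.e. $L^2(\otimes_{i=2}^d \mu_i)$ viewed inside $L^2(\mu)$. (This can also be seen directly: $f \perp \Hilb_1^\tot$ iff $\Pi_1^\tot(f) = 0$ iff $\E[f \mid X_1]$ has the same conditional structure as a function of $X_{\{2,\dots,d\}}$ only; concretely $f$ is $\mu$-a.s. equal to a function $g(x_2,\dots,x_d)$.) Hence $f = g(x_2, \dots, x_d)$ with $g \in L^2(\otimes_{i=2}^d \mu_i)$, which matches the claimed form $h(x) = \sum_{n=1}^N \alpha_n \phi_n(x) + g(x_2, \dots, x_d)$ (up to the harmless typo $x_N$ for $x_d$ in the statement).

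The only genuinely non-routine step is the explicit identification of $(\Hilb_1^\tot)^\perp$ with the space of functions independent of $x_1$; everything else is bookkeeping with orthonormal expansions. I expect this to be the main (minor) obstacle, and I would handle it by the orthogonal-decomposition argument above rather than a direct conditional-expectation computation, since the decomposition $\Hilb_I^\tot = \overset{\perp}{\oplus}_{J \supseteq I} \Hilb_J$ is already established and makes the complement transparent: $J \not\supseteq \{1\}$ means $1 \notin J$, and $\overset{\perp}{\oplus}_{1 \notin J} \Hilb_J$ is precisely the ANOVA decomposition of $L^2(\mu_2 \otimes \dots \otimes \mu_d)$ lifted to $L^2(\mu)$. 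Finally I would note that the ``only if'' direction of the equality case follows because if $h$ has the stated form then $\Pi_1^\tot(h) = g_0 \in G$, so $\Vert \Pi_1^\tot(h) \Vert^2 = \Vert \Pi_G(h) \Vert^2$, and conversely the ``if'' direction is exactly the equality clause inherited from Corollary~\ref{prop:LB_general}.
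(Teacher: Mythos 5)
Your proposal is correct and follows essentially the same route as the paper: a direct application of Corollary~\ref{prop:LB_general} with $G = \mathrm{span}\{\phi_1,\dots,\phi_N\}$, together with the identification $(\Hilb_1^\tot)^\perp = \oplus_{J \subseteq \{2,\dots,d\}} \Hilb_J$, i.e.\ the functions not involving $x_1$, to settle the equality case. Your extra details (the explicit formula for $\Pi_G$ and the remark that the statement's $x_N$ should read $x_d$) only make explicit what the paper leaves implicit.
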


\begin{proof}
This is a direct application of Corollary~\ref{prop:LB_general} 
with $G = \textrm{span} \{ \phi_1, \dots, \phi_m \}$.
The equality case is obtained by remarking that 
$(\Hilb_1^\tot)^\perp$
is formed by functions of $\Hilb$ that do not involve $x_1$:
$(\Hilb_1^\tot)^\perp = \underset{J \subseteq \{2, \dots, d\}} \oplus \Hilb_J$. 
\end{proof} 


\section{Poincar\'e differential operator expansions} \label{sec:DO_expansions}
Generalized chaos expansions are defined from $d$ Hilbert bases 
associated to probability measures on the real line $\mu_i$ ($i = 1, \dots, d)$.
Here, each $\mu_i$ is assumed to be absolutely continuous with respect to the Lebesgue measure. 
In this section, we exhibit a class of Hilbert basis which is well tailored 
to perform sensitivity analysis based on derivatives.
They consist of eigenfunctions of an elliptic differential operator (DO).
More precisely, we choose the DO associated to a 1-dimensional Poincar\'e inequality (assuming it holds)
\begin{equation} \label{eq:Poincare}
\Var_{\mu_1}(h) \leq C \int_{\R} h'(x)^2 \mu_1(dx),
\end{equation}
as it was successfully used to obtain accurate bounds for DGSM \cite{Roustant_Barthe_Iooss_2017}.

Before defining the so-called PDO expansions, 
we first recall the spectral theorem related to Poincar\'e inequalities.
In what follows, for any positive integer $\ell$, 
we denote by $H^\ell(\mu_1)$ the Sobolev space of order $\ell$:
\begin{equation} \label{def:SobolevSpace}
H^\ell(\mu_1) := \{h \in L^2(\mu_1) \text{ such that for all } k \leq \ell, h^{(k)} \in L^2(\mu_1) \} 
\end{equation}

\begin{prop}[Spectral theorem for Poincar\'e inequalities, \cite{BGL_book, Roustant_Barthe_Iooss_2017}] \label{prop:spectralTheorem}
Let $\mu_1(dt) = \rho(t) dt$ be a continuous measure on a bounded interval 
$I = (a,b)$ of $\R$, where $\rho(t) = e^{-V(t)}$. 
Assume that $V$ is continuous and piecewise $C^1$ on $\bar{I} = [a, b]$.
Then consider the differential operator 
\begin{equation} \label{eq:PoincareOperator}
Lh = h'' - V'h'
\end{equation}
defined on $\Hilb' = \{ h \in H^2(\mu_1) \text{ s.t. } h'(a) = h'(b) = 0 \}$.
Then $L$ admits a spectral decomposition. That is, 
there exists an increasing sequence $(\lambda_n)_{n \geq 0 }$ of non-negative values that tends to infinity,
and a set of orthonormal functions $e_n$ which form a Hilbert basis of $L^2(\mu_1)$ such that $Le_n = - \lambda_n e_n$.
Furthermore, all the eigenvalues $\lambda_n$ are simple. 
The first eigenvalue is $\lambda_0 = 0$, and the corresponding eigenspace consists of constant functions
(we can choose $e_0 = 1$).
The first positive eigenvalue $\lambda_1$ is called spectral gap, 
and equal to the inverse of the Poincar\'e constant $\Poinc(\mu_1)$, 
i.e. the smallest constant satisfying Inequality (\ref{eq:Poincare}).\\
\end{prop}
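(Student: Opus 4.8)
The plan is to reduce the statement to the classical spectral theorem for compact self-adjoint operators, after setting up the correct functional-analytic framework adapted to the weighted measure $\mu_1(dt) = \rho(t)\,dt$. The natural object is the Dirichlet-type bilinear form $\mathcal{E}(u,v) := \int_I u'(t) v'(t)\, \mu_1(dt)$ defined on $H^1(\mu_1)$, together with the observation that $L$ defined by (\ref{eq:PoincareOperator}) is the operator such that $\mathcal{E}(u,v) = -\int_I (Lu)\, v\, \mu_1(dt)$ for $u \in \Hilb'$ and $v \in H^1(\mu_1)$; the Neumann boundary conditions $h'(a)=h'(b)=0$ are exactly what makes the integration-by-parts boundary terms vanish (using $\rho = e^{-V}$, so that $\rho' = -V'\rho$ and the operator $L$ is the $\rho$-weighted divergence form operator $\rho^{-1}(\rho u')'$). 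First I would verify this integration-by-parts identity, which shows that $-L$ is symmetric and non-negative on $\Hilb'$: indeed $-\langle Lu, u\rangle_{L^2(\mu_1)} = \mathcal{E}(u,u) = \int_I u'^2\, \mu_1(dt) \geq 0$.

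Next I would invoke the Poincar\'e inequality (\ref{eq:Poincare}), which holds on a bounded interval with continuous positive density (this is a standard fact, provable e.g. by a compactness argument or by Muckenhoupt-type criteria): it guarantees that $\mathcal{E}$ is coercive on the subspace $H^1_0(\mu_1) := \{u \in H^1(\mu_1) : \int_I u\, \mu_1 = 0\}$, i.e. $\mathcal{E}(u,u) \geq C^{-1}\|u\|^2_{L^2(\mu_1)}$ there. By Lax--Milgram, for every $f \in L^2(\mu_1)$ with zero mean there is a unique $u \in H^1_0(\mu_1)$ with $\mathcal{E}(u,v) = \langle f, v\rangle$ for all $v \in H^1_0(\mu_1)$; this defines the solution operator $T : f \mapsto u$. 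The key compactness step is that $T$, viewed as an operator on the mean-zero subspace of $L^2(\mu_1)$, is compact: this follows from the compact embedding $H^1(\mu_1) \hookrightarrow L^2(\mu_1)$ on the bounded interval $I$ (Rellich--Kondrachov; since $\rho$ is bounded above and below by positive constants on $[a,b]$, the weighted and unweighted Sobolev spaces coincide with equivalent norms). The operator $T$ is also self-adjoint and positive because $\mathcal{E}$ is a symmetric positive form.

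The spectral theorem for compact self-adjoint positive operators then yields an orthonormal basis $(e_n)_{n\geq 1}$ of the mean-zero subspace consisting of eigenfunctions $T e_n = \mu_n e_n$ with $\mu_n > 0$, $\mu_n \to 0$; setting $\lambda_n = 1/\mu_n$ gives $-L e_n = \lambda_n e_n$ after checking that eigenfunctions of $T$ lie in $\Hilb'$ and that the weak eigenvalue equation upgrades to the classical one (elliptic regularity: $u'' = V'u' - \lambda_n u$ forces $u \in C^2$ away from the finitely many points where $V'$ jumps, and the Neumann conditions come from the variational formulation tested against functions not vanishing at the endpoints). Adjoining $e_0 = 1$ with $\lambda_0 = 0$ completes the Hilbert basis of all of $L^2(\mu_1)$, with the constant eigenspace being exactly $\ker \mathcal{E}$. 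Simplicity of the eigenvalues is a one-dimensional phenomenon: any two solutions of the second-order linear ODE $u'' - V'u' + \lambda u = 0$ both satisfying $u'(a) = 0$ are proportional by uniqueness for the initial value problem (here the piecewise-$C^1$ regularity of $V$ suffices since the ODE has locally integrable coefficients). Finally, $\lambda_1 = C_{\mathrm P}(\mu_1)^{-1}$ is immediate from the variational characterization $\lambda_1 = \inf\{\mathcal{E}(u,u)/\|u\|^2 : \int u\, \mu_1 = 0, u \neq 0\}$, which is precisely the reciprocal of the best constant in (\ref{eq:Poincare}).

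The main obstacle I anticipate is the regularity bookkeeping at the endpoints and at the finitely many points where $V'$ is discontinuous: one must be careful that "eigenfunction in $H^2(\mu_1)$ with Neumann conditions" is genuinely equivalent to "eigenfunction of the solution operator $T$", which requires a clean elliptic-regularity argument in the weak formulation and a correct treatment of the boundary terms under only piecewise-$C^1$ assumptions on $V$. Everything else (compact embedding, spectral theorem, ODE uniqueness for simplicity) is standard once the weighted-space framework is in place; I would cite \cite{BGL_book} for the abstract functional-analytic skeleton and \cite{Roustant_Barthe_Iooss_2017} for the precise version under these hypotheses, and keep the written proof to a sketch emphasizing the variational formulation and the compactness argument.
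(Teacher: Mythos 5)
Your proposal is sound: the variational formulation of $L$ via the form $\mathcal{E}(u,v)=\int u'v'\,d\mu_1$, the equivalence of weighted and unweighted Sobolev norms (since $V$ continuous on $[a,b]$ makes $\rho=e^{-V}$ bounded above and below), the compact-resolvent argument via Rellich--Kondrachov and the spectral theorem for compact self-adjoint operators, the ODE uniqueness argument for simplicity, and the Rayleigh-quotient identification $\lambda_1=\Poinc(\mu_1)^{-1}$ together constitute the standard proof of this statement. The paper itself gives no proof, deferring to \cite{BGL_book, Roustant_Barthe_Iooss_2017}, and your sketch follows essentially the same route as those references, so there is nothing to flag beyond the regularity/boundary-term bookkeeping you already identify as the delicate point.
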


\begin{rem} \label{rem:spectralConditions}
The assumptions of Proposition~\ref{prop:spectralTheorem} guarantee that $L$ admits a spectral decomposition,
and correspond to a continuous probability distribution defined on a compact support,
whose density is continuous and does not vanish.
However, the spectral decomposition can exist for more general cases.
For instance, it exists for the Normal distribution on $\R$:
the corresponding eigenfunctions consist of Hermite polynomials and eigenvalues to non-negative integers.
On the other hand, the spectral decomposition does not exist for the Laplace (double-exponential) distribution 
on the whole $\R$.\\
\end{rem}

The key property in our context is given by the equation
\begin{equation} \label{eq:intPart}
\langle h', e'_n \rangle =  \lambda_n  \langle h, e_n \rangle,
\end{equation}
corresponding to the weak formulation of the spectral problem $Le_n = -\lambda_n e_n$ associated to the Poincar\'e inequality, 
and holding for all $n \geq 0$, and all $h \in H^1(\mu_1)$.
It implies that geometric quantities involved in PDO expansions 
can be rewritten with derivatives.
In particular, for a centered function $h$, 
we have:
$$ \Vert h \Vert ^2 = \sum_{n=1}^\infty \langle h, e_n \rangle^2 
= \sum_{n=1}^\infty \frac{1}{\lambda_n^2} \langle h', e'_n \rangle^2.$$

Let us come back to the $d$-dimensional situation, where $\mu = \underset{i=1, \dots, d}\otimes \mu_i$.
For each measure $\mu_i$, we make the assumptions of Proposition~\ref{prop:spectralTheorem}
(see also Remark~\ref{rem:spectralConditions} for alternative conditions).
We denote by $L_i$ the corresponding operator and 
$\lambda_{i,n}, e_{i,n}$ ($n \geq 0$) its eigenvalues and eigenfunctions.
We define $H^1(\mu)$ similarly to $H^1(\mu_1)$ (Equation~\ref{def:SobolevSpace}).
We can now define the PDO expansion and then state the main result.\\

\begin{defi}[PDO expansions] \label{def:DOexpansion}
We call \emph{Poincar\'e differential operator (PDO) expansion} the generalized chaos expansion corresponding 
to the Hilbert bases formed by the eigenfunctions of $L_1, \dots, L_d$. 
\end{defi}

\begin{prop}[Poincar\'e-based lower bounds] \label{prop:PoincareLB}
For all $h$ in $H^1(\mu)$, we have
\begin{eqnarray} \label{eq:diffOpPythagore}
D_1^\tot(h) 
&=& \sum_{\ell_1 \geq 1, \ell_2, \dots, \ell_d} \langle h, e_{1,\ell_1} \dots e_{d, \ell_d} \rangle^2 \label{eq:diffOpLB1}  \\
&=& \sum_{\ell_1 \geq 1, \ell_2, \dots, \ell_d} \frac{1}{\lambda_{1,\ell_1}^2} \langle \frac{\partial h}{\partial x_1}, e'_{1,\ell_1} e_{2, \ell_2} \dots e_{d,\ell_d} \rangle^2.  \label{eq:diffOpLB2}
\end{eqnarray}
In particular, limiting ourselves to the first eigenfunction in all dimensions, 
and to first and second order tensors involving $x_1$, we obtain the lower bound
\begin{eqnarray} 
D_1^\tot(h)  
&\geq & \langle h, e_{1,1}\rangle^2 + \sum_{i=2}^d  \langle h, e_{1,1} e_{i, 1}\rangle^2 \label{ineq:diffOpLB1} \\ 
& = & \Poinc(\mu_1)^2 \left( \langle \frac{\partial h}{\partial x_1}, e'_{1,1} \rangle^2 
+ \sum_{i=2}^d  \langle \frac{\partial h}{\partial x_1}, e'_{1,1} e_{i, 1} \rangle^2 \right). \label{ineq:diffOpLB2}
\end{eqnarray}
\end{prop}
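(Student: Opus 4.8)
The plan is to derive the two displayed equalities (\ref{eq:diffOpLB1}) and (\ref{eq:diffOpLB2}) from the general machinery of Section~\ref{sec:GC_expansions}, and then obtain the inequalities (\ref{ineq:diffOpLB1}) and (\ref{ineq:diffOpLB2}) by restricting the sum to finitely many terms. First I would invoke Proposition~\ref{prop:HilbertBasis}: since the PDO expansion is a generalized chaos (Definition~\ref{def:DOexpansion}), the family $\T_1^\tot = \{e_\multi : \ell_1 \geq 1\}$ is a Hilbert basis of $\Hilb_1^\tot$. By Parseval applied to the orthogonal projection $\Pi_1^\tot h$, together with the identification $D_1^\tot(h) = \Vert \Pi_1^\tot h \Vert^2$ noted after Proposition~\ref{prop:HilbertBasis}, we get exactly (\ref{eq:diffOpLB1}):
\begin{equation*}
D_1^\tot(h) = \sum_{\ell_1 \geq 1,\, \ell_2, \dots, \ell_d} \langle h, e_{1,\ell_1} \cdots e_{d,\ell_d} \rangle^2.
\end{equation*}

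Next I would pass to (\ref{eq:diffOpLB2}) term by term using the weak spectral identity (\ref{eq:intPart}). The point is that for a fixed multi-index with $\ell_1 \geq 1$, one can integrate by parts in the $x_1$ variable only: writing $\langle h, e_{1,\ell_1} e_{2,\ell_2} \cdots e_{d,\ell_d}\rangle$ as an iterated integral, fixing $(x_2,\dots,x_d)$ and applying (\ref{eq:intPart}) to the one-dimensional function $x_1 \mapsto h(x_1,\dots,x_d)$ (which lies in $H^1(\mu_1)$ for a.e.\ $(x_2,\dots,x_d)$ since $h \in H^1(\mu)$) yields
\begin{equation*}
\langle h, e_{1,\ell_1} e_{2,\ell_2} \cdots e_{d,\ell_d} \rangle = \frac{1}{\lambda_{1,\ell_1}} \left\langle \frac{\partial h}{\partial x_1}, e'_{1,\ell_1} e_{2,\ell_2} \cdots e_{d,\ell_d} \right\rangle .
\end{equation*}
Squaring and summing gives (\ref{eq:diffOpLB2}). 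The one delicate point here — which I expect to be the main obstacle — is the Fubini/measurability justification that $h \in H^1(\mu)$ makes the partial function $x_1 \mapsto h(x_1, x_2, \dots, x_d)$ lie in $H^1(\mu_1)$ for $\mu_{-1}$-almost every $(x_2,\dots,x_d)$, with $\partial h/\partial x_1$ serving as its a.e.\ derivative, so that the slice-wise weak identity (\ref{eq:intPart}) may legitimately be applied and then re-integrated; this is a standard but slightly technical application of the theory of Sobolev functions on product measures, and I would either cite it or sketch it.

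Finally, for the inequalities: (\ref{ineq:diffOpLB1}) is immediate from (\ref{eq:diffOpLB1}) by keeping only the terms with $\multi = (1,0,\dots,0)$ and $\multi = (1,0,\dots,1,\dots,0)$ (a single $1$ in position $i$, for $i=2,\dots,d$), all other terms being non-negative; equivalently it is the instance of Corollary~\ref{prop:LB_general_1var} with $\phi$-functions $e_{1,1}$ and $e_{1,1}e_{i,1}$, which indeed lie in $\Hilb_1^\tot$ by Proposition~\ref{prop:HilbertBasis}. Then (\ref{ineq:diffOpLB2}) follows by applying the term-by-term integration-by-parts identity above to exactly these finitely many terms, and using $\lambda_{1,1}^{-1} = \Poinc(\mu_1)$ from Proposition~\ref{prop:spectralTheorem} (noting $e'_{1,1} e_{i,0} = e'_{1,1}$ since $e_{i,0}=1$). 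This last part is routine once the weak identity has been established at the level of a single tensor basis function.
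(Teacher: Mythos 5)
Your proposal is correct and follows essentially the same route as the paper's proof: Parseval over the Hilbert basis $\T_1^\tot$ of $\Hilb_1^\tot$ for (\ref{eq:diffOpLB1}), the slice-wise application of (\ref{eq:intPart}) in $x_1$ followed by integration over the remaining variables for (\ref{eq:diffOpLB2}), and truncation to the finitely many tensors together with $\Poinc(\mu_1)=1/\lambda_{1,1}$ for the lower bounds. The Fubini/Sobolev-slicing point you flag is indeed the only technical step, and the paper passes over it implicitly exactly as you anticipate.
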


\begin{proof}
By Proposition~\ref{prop:HilbertBasis}, the subset of ($e_\multi$) corresponding to $\ell_1 \geq 1$ is a Hilbert basis of $\Hilb_1^\tot$.
This gives (\ref{eq:diffOpLB1}).
Now, for  $\ell_1 \geq 1$:
$$ \langle h, e_{1,\ell_1} \dots e_{d,\ell_d} \rangle = 
\frac{1}{\lambda_{1, \ell_1}} \langle \frac{\partial h}{\partial x_1}, e'_{1,\ell_1} e_{2, \ell_2} \dots e_{d,\ell_d} \rangle $$
This is obtained by applying Eq.~(\ref{eq:intPart}) to $x_1 \mapsto h(x)$ 
and integrating with respect to $x_2, \dots, x_d$:
\begin{eqnarray*}
\langle h, e_{1,\ell_1} \dots e_{d,\ell_d} \rangle &=&
\int \langle h(\bullet, x_2, \dots, x_d), e_{1,\ell_1} \rangle_{L^2(\mu_1)} 
\prod_{i=2}^d e_{i,\ell_i} \mu_i(dx_i) \\
&=& \frac{1}{\lambda_{1, \ell_1}} \int \langle \frac{\partial h(\bullet, x_2, \dots, x_d)}{\partial x_1}, e'_{1,\ell_1} \rangle_{L^2(\mu_1)} 
\prod_{i=2}^d e_{i,\ell_i} \mu_i(dx_i)\\
&=& \frac{1}{\lambda_{1, \ell_1}} \langle \frac{\partial h}{\partial x_1}, e'_{1,\ell_1} e_{2, \ell_2} \dots e_{d,\ell_d} \rangle
\end{eqnarray*}
This gives (\ref{eq:diffOpLB2}). 
The remainder is straightforward, knowing that ${\Poinc(\mu_1) = 1 / \lambda_{1,1}}$.
\end{proof}

\paragraph{Case of uniform distributions: Fourier expansion.}
Let us assume that $\mu_1$ is uniform on $[-1/2, 1/2]$. 
Then, the differential operator $L$ is the usual Laplacian, and its eigenfunctions correspond to Fourier basis.
More precisely, using the Neumann boundary conditions $h'(a) = h'(b) = 0$, 
one can check that the eigenvalues are 
$ \lambda_\ell = \ell^2 \pi^2 $, $(\ell=0, 1, \dots)$, 
and a set of orthonormal eigenfunctions is given by 
$e_0 = 1$ and 
$$e_\ell(x_1) = \sqrt{2}{\cos ( \pi \ell (x_1 + 1/2))}$$
for $\ell > 0$.
Denote by $\vert \underline{\ell} \vert_0$ the number of non-zero coefficients of the multi-index $\underline{\ell} = (\ell_1, \dots, \ell_d)$. 
When the other $\mu_i$'s are also uniform on $[-1/2, 1/2]$,
 we obtain a multivariate Parseval formula for $D_1^\tot$:
\begin{eqnarray*} 
&& D_1^\tot(h)
= \sum_{\ell_1 \geq 1, \ell_2, \dots, \ell_d} 2^{\vert \underline{\ell} \vert_0}
\langle h, \prod_{i=1}^d \cos ( \pi \ell_i (x_i + 1/2)) \rangle^2 \\
&=& \sum_{\ell_1 \geq 1, \ell_2, \dots, \ell_d} 2^{\vert \underline{\ell} \vert_0}
\frac{1}{ \pi^2 \ell_1^2} \langle 
\frac{\partial h}{\partial x_1}, \sin ( \pi \ell_1 (x_1 + 1/2)) \prod_{i=2}^d \cos ( \pi \ell_i (x_i + 1/2)) \rangle^2
\end{eqnarray*}
Limiting for instance the sum to first terms, we obtain the lower bounds
\begin{eqnarray} 
D_1^\tot(h)  
&\geq & 2 \langle h, \sin ( \pi x_1) \rangle^2 
+ 4 \sum_{i=2}^d  \langle h, \sin (\pi x_1) \sin (\pi x_i) \rangle^2 \\ \label{ineq:DOFourier}
&\geq & \frac{2}{\pi^2} \left( \langle \frac{\partial h}{\partial x_1}, \cos(\pi x_1) \rangle^2 
+ 2 \sum_{i=2}^d  \langle \frac{\partial h}{\partial x_1}, \cos(\pi x_1) \sin(\pi x_i) \rangle^2 \right) \label{ineq:DOFourierDer}
\end{eqnarray}

\paragraph{Extension of PDO expansions to weighted Poincar\'e inequalities.} 
PDO expansions correspond to diffusion operators associated to Poincar\'e inequalities.
They can be extended to weighted Poincar\'e inequalities
\begin{equation} \label{eq:weightedPoincare}
\Var_{\mu_1}(h) \leq C \int_{\R} h'(x)^2 w(x) \mu_1(dx),
\end{equation}
defined for some suitable positive weight $w$.
Such inequalities have recently been used in sensitivity analysis \cite{Song_Weighted_Poincare}.
They are also useful when a probability distribution does not admit a Poincar\'e inequality
such as the Cauchy distribution \cite{Bonnefont_Joulin_Ma_weighted_Poincare}.
The weighted Poincar\'e inequality (\ref{eq:weightedPoincare}) corresponds to the differential operator
\begin{equation} \label{eq:weightedPoincareOperator}
L h = w h '' + (w' - w V') h'.
\end{equation}
Similarly to (\ref{eq:intPart}), rewriting geometrical quantities with derivatives 
can be done with the formula:
\begin{equation} \label{eq:intPartWeight}
\langle h', e'_n \rangle_w =  \lambda_n  \langle h, e_n \rangle,
\end{equation}
where $\langle ., . \rangle_w$ is the weighted dot product  
$\langle f, g \rangle_w := \int f(x)g(x)w(x)\mu(dx)$.
Proposition~\ref{prop:PoincareLB} can be adapted accordingly.

\paragraph{When PDO expansions coincide with PC expansions.}
There are exactly three cases where PDO expansions coincide with PC expansions, 
even when considering their extension to weighted Poincar\'e inequalities.
Indeed, it can be shown that orthogonal polynomials are eigenfunctions of diffusion operators 
only for the Normal, Gamma and Beta distributions, corresponding respectively to 
Hermite, Laguerre and Jacobi orthogonal polynomials (\cite{BGL_book}, \S~2.7).
These differential operators correspond to weighted Poincar\'e inequalities 
with weight $w(x) = x$ for the Gamma distribution $d\mu_1(x) \propto x^{\alpha-1} e^{- \alpha x} $ on $\R^+$, 
and weight $w(x) = 1 - x^2$ for the Beta distribution $d\mu_1(x) \propto (1 - x)^{\alpha-1} (1 + x)^{\beta - 1} $ on $[-1, 1]$. Notice that in \cite{Song_Weighted_Poincare}, $w$ is chosen such that the eigenfunction associated to $\lambda_1$ is a first-order polynomial. Except for the three cases mentioned above, the other eigenfunctions cannot be all polynomials. 


\section{Weight-free derivative global sensitivity measures} \label{sec:weight_free_DGSM}

The lower bounds of total indices obtained with generalized chaos expansions may involve weighted DGSM.
For instance, in PDO expansions, weights involve the eigenfunction derivatives (Equation (\ref{ineq:diffOpLB2})).
The presence of weight can be a drawback when the integral has to be estimated with a small sample size, 
as it can increase the variance of the Monte Carlo estimator. 
In this section, we show how to choose the two first orthonormal functions of GC expansions 
in order to obtain weight-free DGSM.
Interestingly, this is related to Fisher information and Cram\'er-Rao bounds.

\begin{prop}[Lower bounds with weight-free DGSM, for pdf vanishing at the boundaries] \label{prop:FisherLB1}
Assume that $\frac{\partial h (x) }{\partial x_1}$ is in $L^2(\mu)$,
and that the probability distributions $\mu_i$ are absolutely continuous on their support $(a_i, b_i)$
with $-\infty \leq a_i < b_i \leq +\infty$.
For each $i$, denote by $p_i$ the corresponding probability density function.
Assume that $p_i$ belongs to $H^1(\mu_i)$, do not vanish on $(a_i, b_i)$ 
but vanishes at the boundaries: $p_i(a_i) = p_i(b_i) = 0$. 
Finally, assume that $p_i'$ is not identically zero, and that $p_i'/p_i$ is in $L^2(\mu_i)$.
Define $Z_i(x_i) = (\ln p_i)'(x_i)$ and $I_i = \Var(Z_i(X_i))$.
Then, we have the inequality:
\begin{equation} \label{eq:DGSMlowerBound}
\Dt \geq  I_1^{-1}  c_1 ^2 + I_1^{-1} \sum_{j = 2}^d  I_j^{-1} c_{1,j}^2 
\end{equation} 
with 
\begin{eqnarray*}
c_1 &=& \int h(x) Z_1(x_1) \mu(dx) = - \int \frac{\partial h(x)}{\partial x_1}  \mu(dx) \\
c_{1, j} &=&  \int h(x) Z_1(x_1)Z_j(x_j) \mu(dx) 
         =   - \int \frac{\partial h (x) }{\partial x_1} Z_j(x_j) \mu(dx)
\end{eqnarray*}
Furthermore, if all the cross derivatives $\frac{\partial^2 h (x) }{\partial x_1 \partial x_j}$ 
are in $L^2(\mu)$, then
$$c_{1,j} =  \int \frac{\partial^2 h (x) }{\partial x_1 \partial x_j} \mu(dx)$$
The cases of equality correspond to functions $h$ of the form
\begin{equation} \label{propFisher:eqCase}
h(x) = \alpha_1 Z_1(x_1) + \sum_{j=2}^d \alpha_j Z_1(x_1)Z_j(x_j) + h(x_2, \dots, x_d).
\end{equation}
\end{prop}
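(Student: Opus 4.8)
The plan is to apply Corollary~\ref{prop:LB_general_1var} to a carefully chosen finite family of orthonormal functions in $\Hilb_1^\tot$ built from the score functions $Z_i$, and then to rewrite the resulting coefficients by integration by parts.

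First I would record the elementary properties of $Z_i$. Since $p_i \in H^1(\mu_i)$ and $p_i(a_i)=p_i(b_i)=0$, we get $\E[Z_i(X_i)] = \int_{a_i}^{b_i} p_i'(x_i)\,dx_i = p_i(b_i)-p_i(a_i) = 0$, hence $I_i = \Var(Z_i(X_i)) = \E[Z_i(X_i)^2]$; moreover $I_i<\infty$ because $p_i'/p_i \in L^2(\mu_i)$, and $I_i>0$ because $p_i'$ is not identically zero. Thus the functions $\phi_1(x) := I_1^{-1/2} Z_1(x_1)$ and, for $j=2,\dots,d$, $\phi_j(x) := (I_1 I_j)^{-1/2} Z_1(x_1)Z_j(x_j)$ are well defined in $L^2(\mu)$. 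Each $Z_i$ is a centered function of $x_i$ alone, so $\phi_1 \in \Hilb_{\{1\}}$ and $\phi_j \in \Hilb_{\{1,j\}}$; in particular all the $\phi_n$ lie in $\Hilb_1^\tot$. Orthonormality is a direct computation using independence of the $X_i$'s and $\E[Z_i]=0$: one checks $\E[\phi_n^2]=1$ for every $n$, while any cross term $\E[\phi_1\phi_j]$ or $\E[\phi_j\phi_k]$ ($j\neq k$) contains an isolated factor $\E[Z_i]=0$.

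Next I would invoke Corollary~\ref{prop:LB_general_1var} with the family $\{\phi_1,\dots,\phi_d\}$, which immediately gives
\[
\Dt \;\geq\; \Big(\int h\,\phi_1\,\mu(dx)\Big)^2 + \sum_{j=2}^d \Big(\int h\,\phi_j\,\mu(dx)\Big)^2 = I_1^{-1} c_1^2 + I_1^{-1}\sum_{j=2}^d I_j^{-1} c_{1,j}^2,
\]
with $c_1 = \int h\,Z_1\,\mu(dx)$ and $c_{1,j} = \int h\,Z_1 Z_j\,\mu(dx)$, which is \eqref{eq:DGSMlowerBound}. The alternative expressions for $c_1$ and $c_{1,j}$ follow by integration by parts in $x_1$ (and then in $x_j$): using $Z_1(x_1)p_1(x_1)=p_1'(x_1)$ and integrating by parts in $x_1$ at fixed $x_2,\dots,x_d$, the boundary term $[h\,p_1]_{a_1}^{b_1}$ vanishes since $p_1(a_1)=p_1(b_1)=0$, leaving $c_1 = -\int \frac{\partial h}{\partial x_1}\,\mu(dx)$; the same manipulation with the extra factor $Z_j(x_j)$ gives $c_{1,j} = -\int \frac{\partial h}{\partial x_1} Z_j(x_j)\,\mu(dx)$, and, when the cross derivative is in $L^2(\mu)$, a further integration by parts in $x_j$ (boundary term $[\frac{\partial h}{\partial x_1} p_j]_{a_j}^{b_j}=0$) yields $c_{1,j} = \int \frac{\partial^2 h}{\partial x_1\partial x_j}\,\mu(dx)$. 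Finally, the equality case is read off from Corollary~\ref{prop:LB_general_1var}: equality forces $h = \sum_n \alpha_n \phi_n + g(x_2,\dots,x_d)$, and substituting the definitions of the $\phi_n$, absorbing the normalizing constants $I_i^{-1/2}$ into the coefficients, produces exactly \eqref{propFisher:eqCase}.

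The routine but delicate point is the justification of the integrations by parts: one needs enough regularity and integrability of $h$ (together with $\frac{\partial h}{\partial x_1}\in L^2(\mu)$, $p_i'/p_i\in L^2(\mu_i)$ and $p_i\in H^1(\mu_i)$) to ensure that $x_1\mapsto h(x)p_1(x_1)$ is absolutely continuous with limit $0$ at both endpoints, and that Fubini's theorem applies when integrating over the remaining variables. This is precisely where the hypothesis $p_i(a_i)=p_i(b_i)=0$ is essential; when the density does not vanish at the boundary one picks up nonzero boundary contributions, which motivates the companion statement treating that case.
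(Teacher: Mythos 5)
Your proposal is correct and follows essentially the same route as the paper: build the orthonormal functions $\phi_1 = I_1^{-1/2}Z_1$ and $\phi_j = (I_1 I_j)^{-1/2}Z_1 Z_j$ in $\Hilb_1^\tot$ (using $\E[Z_i]=0$ from the vanishing boundary values), apply Corollary~\ref{prop:LB_general_1var}, and obtain the derivative expressions of $c_1$ and $c_{1,j}$ by integration by parts. Your direct verification of orthonormality via independence is just an unpacking of the paper's appeal to Proposition~\ref{prop:HilbertBasis}, so there is no substantive difference.
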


\begin{proof}
For $i=1, \dots, d$, let $e_{i,1}(x_i) := I_i^{-1/2}Z_i(x_i)$. 
Then define 
$$\phi_1(x) = e_{1,1}(x_1), \quad \text{ and for } j=2, \dots, d: \quad \phi_j(x) = e_{1,1}(x_1) e_{j,1}(x_j).$$
By definition, the norm of each $e_{i,1}$ is equal to $1$. Furthermore, $Z_i$ is centered, since 
$$\Esp[Z_i] = \int_{a_i}^{b_i} p'_i(x_i)dx_i = \left[ p_i(x_i) \right]_{a_i}^{b_i} = 0.$$
This implies that $e_{i,1}$ is orthogonal to $e_{i,0} = 1$.
By Proposition~\ref{prop:HilbertBasis}, the $\phi_i$'s are then orthonormal functions of $\Hilb_1^\tot$.
The inequality is then given by Corollary~\ref{prop:LB_general_1var},
with first expressions of $c_1$ and $c_{1,j}$.
The other ones are obtained by integrating by part, 
using that the values at the boundaries of the $p_j$'s are zero.
\end{proof}

The proposition can be adapted when the probability density functions 
do not vanish at the boundaries of their support, by modifying the definition of the $Z_j$'s.
Notice that the expressions of $c_1$ and $c_{1,j}$ that involve derivatives 
then contain corrective terms, and are of limited practical interest.
For instance, denoting $\left[h \right]_{a_1}^{b_1} = h(b_1) - h(a_1)$
and $h_0 = \int h(x) \mu(dx)$, we have:
$$ c_1 = \left[ \left( \int h(x_1, x_{-1}) \mu_{-1}(d x_{-1}) - h_0 \right) p_1(x_1) \right]_{a_1}^{b_1} 
- \int \frac{\partial h(x)}{\partial x_1}  \mu(dx).$$
Nevertheless, the first expressions of $c_1$ and $c_{1,j}$ remain valid and, 
by analogy to Proposition~\ref{prop:FisherLB1},
have a close connection to derivative-based lower bounds.

\begin{prop}[[Lower bounds with weight-free DGSM, general case] \label{prop:FisherLB}
Assume that $\frac{\partial h (x) }{\partial x_1}$ is in $L^2(\mu)$,
and that the probability distributions $\mu_i$ are absolutely continuous on their support $(a_i, b_i)$
with $-\infty \leq a_i < b_i \leq +\infty$.
For each $i$, denote by $p_i$ the corresponding probability density function.
Assume that $p_i$ belongs to $H^1(\mu_i)$ and do not vanish on $(a_i, b_i)$.
Finally, assume that $p_i'$ is not identically zero, and that $p_i'/p_i$ is in $L^2(\mu_i)$.
Define $Z_i(x_i) = (\ln p_i)'(x_i) - \left[ p_i(x_i) \right]_{a_i}^{b_i}$ and $I_i = \Var(Z_i(X_i))$.
Then Inequality (\ref{eq:DGSMlowerBound}) holds with $c_1 = \int h(x) Z_1(x_1) \mu(dx)$ and 
$c_{1, j} =  \int h(x) Z_1(x_1)Z_j(x_j) \mu(dx)$. The equality case is the same as in Proposition~\ref{prop:FisherLB1},
and given by (\ref{propFisher:eqCase}).
\end{prop}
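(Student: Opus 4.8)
The plan is to follow the proof of Proposition~\ref{prop:FisherLB1} verbatim, the only genuinely new ingredient being that the score $(\ln p_i)'$ is no longer centred under $\mu_i$ when $p_i$ does not vanish at the boundaries, so it must be recentred by hand. First I would compute the mean of the score: $\E[(\ln p_i)'(X_i)] = \int_{a_i}^{b_i} p_i'(x_i)\,dx_i = [p_i(x_i)]_{a_i}^{b_i}$, a \emph{constant} (equal to $0$ when $a_i$ or $b_i$ is infinite). This shows that $Z_i(x_i) = (\ln p_i)'(x_i) - [p_i(x_i)]_{a_i}^{b_i}$ is centred. Since $p_i'/p_i \in L^2(\mu_i)$ by hypothesis, $Z_i \in L^2(\mu_i)$; and since $p_i'$, hence $(\ln p_i)'$, is not identically zero, $I_i = \Var(Z_i(X_i)) = \E[Z_i(X_i)^2] \in (0,\infty)$. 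Consequently $e_{i,1}(x_i) := I_i^{-1/2} Z_i(x_i)$ is well defined, has unit norm in $L^2(\mu_i)$, and is orthogonal to $e_{i,0} = 1$ because $Z_i$ is centred.

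Next I would set $\phi_1(x) = e_{1,1}(x_1)$ and, for $j = 2, \dots, d$, $\phi_j(x) = e_{1,1}(x_1)\,e_{j,1}(x_j)$, and check that $\phi_1, \dots, \phi_d$ are orthonormal elements of $\Hilb_1^\tot$. Orthonormality is immediate from the independence of the $X_i$ together with $\E[e_{i,1}(X_i)] = 0$ and $\E[e_{i,1}(X_i)^2] = 1$. Membership in $\Hilb_1^\tot$ follows exactly as in Proposition~\ref{prop:HilbertBasis}: each $\phi_n$ is a tensor product whose $x_1$-factor is centred, so $\E[\phi_n(X) \mid X_J] = 0$ for every $J \subseteq \{2, \dots, d\}$, i.e. $\phi_n$ is orthogonal to $(\Hilb_1^\tot)^\perp = \bigoplus_{J \subseteq \{2,\dots,d\}} \Hilb_J$. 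Then Corollary~\ref{prop:LB_general_1var}, applied with $G = \mathrm{span}\{\phi_1, \dots, \phi_d\}$, gives $\Dt \geq \sum_{n=1}^d \langle h, \phi_n \rangle^2$. Expanding $\langle h, \phi_1 \rangle = I_1^{-1/2} c_1$ and $\langle h, \phi_j \rangle = I_1^{-1/2} I_j^{-1/2} c_{1,j}$, with $c_1 = \int h(x) Z_1(x_1)\,\mu(dx)$ and $c_{1,j} = \int h(x) Z_1(x_1) Z_j(x_j)\,\mu(dx)$, yields precisely Inequality~(\ref{eq:DGSMlowerBound}). For the equality case, the equality clause of Corollary~\ref{prop:LB_general_1var} says $h(x) = \sum_{n=1}^d \tilde{\alpha}_n \phi_n(x) + g(x_2, \dots, x_d)$; absorbing the factors $I_i^{-1/2}$ into new coefficients $\alpha_n$ gives the announced form~(\ref{propFisher:eqCase}).

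Unlike Proposition~\ref{prop:FisherLB1}, I would deliberately \emph{not} rewrite $c_1$ and $c_{1,j}$ via integration by parts: without the vanishing-at-the-boundary hypothesis the resulting derivative expressions carry boundary correction terms (as illustrated in the discussion following Proposition~\ref{prop:FisherLB1}) and are not part of the statement. The only delicate point in the argument is justifying that the boundary quantity $[p_i(x_i)]_{a_i}^{b_i}$ is well defined and finite — that is, that $p_i$ admits limits at $a_i$ and $b_i$ — and that the identity $\E[(\ln p_i)'(X_i)] = [p_i]_{a_i}^{b_i}$ holds; this is where the assumptions $p_i \in H^1(\mu_i)$ and $p_i'/p_i \in L^2(\mu_i)$ do the work (with the boundary value interpreted as $0$ at an infinite endpoint), and it is the step I would write out most carefully. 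Everything else reduces, as above, to the already-established Corollary~\ref{prop:LB_general_1var}.
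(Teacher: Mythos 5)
Your proposal is correct and follows exactly the argument the paper intends: the paper gives no separate proof for this proposition, presenting it as the analogue of Proposition~\ref{prop:FisherLB1}, and your only new ingredient --- recentring the score by its mean $\E[(\ln p_i)'(X_i)]=[p_i]_{a_i}^{b_i}$ so that $Z_i$ is centred, then reusing the orthonormal functions $\phi_1,\dots,\phi_d$ and Corollary~\ref{prop:LB_general_1var} --- is precisely the intended adaptation, with the boundary-limit justification spelled out more carefully than the paper does. No gap; same approach.
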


\newtheorem{rem1}{Remark}
\begin{rem1}
The expressions of $Z_i$ and $I_i$ in Proposition~\ref{prop:FisherLB} correspond 
respectively to the score and to the Fisher information at $\bm{\theta} = 0$ 
of a parametric family of probability distributions obtained by translation
$p_{i, \theta_i}(x_i) = p_i(x_i + \theta_i)$.
In this framework, the lower bound (\ref{eq:DGSMlowerBound}) corresponds to the Cram\'er-Rao lower bound.
\end{rem1}

\paragraph{Examples.}
First consider the case of normal distributions $\mu_i \sim \mathcal{N}(m_i, v_i)$ ($i=1, \dots, d$).
Applying Inequality (\ref{eq:DGSMlowerBound}) gives
\begin{equation} \label{eq:normalLB}
\Dt \geq v_1\left( \int \frac{\partial h(x)}{\partial x_1}  \mu(dx) \right)^2 
+ v_1 \sum_{j = 2}^d  v_j \left( \int 
\frac{\partial^2 h (x) }{\partial x_1 \partial x_j} \mu(dx) \right)^2.
\end{equation}
Here, the inequality is equivalent to Inequality~(\ref{ineq:diffOpLB2}) obtained with the Poincar\'e differential operator
of Section~\ref{sec:DO_expansions}, since $Z_i$ is a first-order polynomial, 
and thus equal to the first eigenvector of $L$ (Hermite polynomial).
The case of equality corresponds to functions of the form 
$$ h(x) = \alpha_1 (x_1 - m_1) + \sum_{j=2}^m \alpha_j (x_1 - m_1)(x_j - m_j) + g(x_2, \dots, x_d).$$

Other inequalities can be established for standard probability distributions.
Table~\ref{tab:derLowerBound} summarizes the results for some of them.
Notice that the equality case does not always correspond to polynomials
(see the form of $Z$).
Interestingly, an inequality is obtained for the Cauchy distribution, 
whereas the theory of Section~\ref{sec:DO_expansions} does not apply 
as this distribution does not admit a Poincar\'e constant.
On the other hand, some probability distributions for which Section~\ref{sec:DO_expansions} is applicable,
do not satisfy the assumptions of Proposition \ref{prop:FisherLB}, 
such as the uniform ($p'_i$ is identically zero) or the triangular distributions ($p_i'/p_i$ does not belong to $L^2(\mu_i)$).

\bigskip
\begin{table} 
\centering
\begin{tabular}{|c|c|c|c|c|} \hline
Dist. name & Support & $p$ & $Z$ & $I$ \\ \hline
Normal & $\R$ & $\frac{1}{s \sqrt{2\pi}} \exp \left( - \frac{1}{2} \frac{(x - m)^2}{s^2}\right)$ 
& $- (X - m)/s^2$ & $1 / s^2$ \\ \hline
Laplace & $\R$ & $\frac{1}{2s} \exp \left( \frac{\vert x - m \vert}{s} \right)$ 
& $- \sgn(X - m)/s$ & $1 / s^2$ \\ \hline
Cauchy & $\R$ & $\frac{1}{\pi} \frac{s}{(x-x_0)^2 + s^2}$
& $\frac{-2(x-x_0)}{(x - x_0)^2 + s^2}$ & $1/(2s^2)$\\ \hline
\end{tabular}
\caption{Useful quantities for derivative-based lower bounds. 
For readability, we have removed the subscript $j$ for $p, Z, I$.
The parameter $s$ is a scale parameter, and can be different from the standard deviation.}
\label{tab:derLowerBound}
\end{table}

\paragraph{Link to other works.}
Here, we briefly compare our lower bounds to those presented in the recent review \cite{Kucherenko_Iooss_book}.

For the uniform distribution on $[0, 1]$, 
we can obtain both a better upper bound 
and a description of the equality case.
For that, we apply Corollary~\ref{prop:LB_general_1var} to the orthonormal function obtained from $x_1^m$,
i.e. $\phi(x_1) = (x_1^m - m_1)/s_1$ with $m_1 = 1/(m+1)$ and $s_1^2 = \left(\frac{m}{m+1} \right)^2 \frac{1}{2m+1}$.
Then after some algebra and an integration by part, we obtain 
$$ \Dt \geq \frac{2m+1}{m^2} \left( \int (h(1, x_{-1}) - h(x)) dx - w_1^{(m+1)} \right)^2$$
where $w_1^{(m+1)} = \int \frac{\partial h(x)}{\partial x_1} x_1^{m+1} dx$. 
This improves on the lower bound found in \cite{Kucherenko_Iooss_book}, Theorem 2, which has the same form, 
but with the smaller multiplicative constant $\frac{2m+1}{(m+1)^2}$.
Furthermore, the lower bound above is attained when $h$ has the form $h(x) = \alpha_1 x_1^m + g(x_2, \dots, x_d)$.
However, notice that these two lower bounds are only a lower bound for $D_1 \leq \Dt$, 
and can be improved by considering additional orthonormal functions belonging to $\Hilb_1^\tot \setminus \Hilb_1$. 

For normal distributions, Inequality (\ref{eq:normalLB}) 
improves the lower bound given by \cite{Kucherenko_Song_2016}, i.e.
$$ \Dt \geq v_1 \left( \int \frac{\partial h(x)}{\partial x_1}  \mu(dx) \right)^2.$$
Here also, this latter lower bound is only a lower bound of $D_1 \leq \Dt$ 
since it corresponds to the case in Corollary~\ref{prop:LB_general_1var} 
where the $\phi_j$'s (here $\phi_1(x) = Z_1(x_1)/I_1^{1/2}$) only depend on $x_1$.


\section{Examples on analytical functions} \label{sec:examples}

This section briefly illustrates PDO expansions for the uniform distribution
on benchmark functions from sensitivity analysis.
We assess the accuracy of the lower bounds of total indices, when only the two first eigenvalues are used.

\subsection{A polynomial function with interaction} 

\begin{example} Let us consider $g(x_1, x_2) = x_1 + a x_1 x_2$, 
and let $\mu$ be the uniform distribution on $[-1/2, 1/2]^2$.
The inequalities obtained by truncating the PDO expansions to the first eigenvalue are:
\begin{eqnarray*}
D_1 = \frac{1}{12} &\approx& 0.0833  \geq  0.0821 \approx \frac{8}{\pi^4} \\ 
\label{eq:polynomLBmain}
D_1^\tot = \frac{1}{12} + \frac{a^2}{144} &\approx & 0.0833 + 0.0069 \, a^2   \\
& \geq &  0.0821 + 0.0067 \, a^2 \approx \frac{8}{\pi^4} + \frac{64}{\pi^8} \, a^2
\label{eq:polynomLBtotal}
\end{eqnarray*}
\end{example}

We can see that for a polynomial function of degree $1$ with respect to $x_1$, 
the lower bound obtained by restricting the PDO expansion to the first eigenvalue is very accurate.
Hence, we do not loose a lot of information by ignoring that the function is a polynomial.
This is an ideal situation for polynomial chaos.\\ 

Let us give some computing details on the previous inequalities.
It is easy to check that the two terms $x_1$, $a x_1 x_2$ 
correspond to the main effect and second order interaction respectively. 
The partial variances are given by $D_1 = 1/12$ and $D_{1,2} = a^2/144$. 
Hence, $D_1^\tot = 1/12 + a^2/144$.
Restricting the PDO expansion to the first term, a lower bound is given by Inequality~(\ref{ineq:DOFourierDer}):
$$ D_1^\tot \geq  \frac{2}{\pi^2} \left( \langle \frac{\partial g}{dx_1}, \cos(\pi x_1) \rangle^2 
+ 2 \langle \frac{\partial g}{dx_1}, \cos(\pi x_1) \sin(\pi x_2) \rangle^2  \right).$$
The two terms of the lower bound above correspond to a lower bound of $D_1$ and $D_{1,2}$ respectively. 
A direct computation gives:
\begin{eqnarray*}
\textrm{LB}_1 &:=&  \frac{2}{\pi^2} \langle \frac{\partial g}{dx_1}, \cos(\pi x_1) \rangle^2 
= \frac{2}{\pi^2} \left( \frac{2}{\pi} \right)^2 = \frac{8}{\pi^4} \\
\textrm{LB}_{1,2} &:=&  \frac{2}{\pi^2} 2 \langle \frac{\partial g}{dx_1}, \cos(\pi x_1) \sin(\pi x_2) \rangle^2 
= \frac{2}{\pi^2} . 2 . \left( \frac{4a}{\pi^3} \right)^2 = \frac{64a^2}{\pi^8} 
\end{eqnarray*}
The result follows.

\subsection{A separable function}
\begin{example} 
Consider the g-Sobol' function on $[-1/2, 1/2]$ defined by
$$ g(x) = \prod_{i = 1}^d (1 + h_i(x_i)) $$
with $h_i(x_i) = (4 \vert x_i \vert - 1)/ (1+a_i)$ ($i=1, \dots, d$),
and let $\mu$ be the uniform distribution on $[-1/2, 1/2]^d$.
The inequalities obtained by truncating the PDO expansions to the first two eigenvalues are:
\begin{eqnarray}
D_i = \frac{1}{3} \frac{1}{(1+a_i)^2} & \geq & \frac{32}{\pi^4} \frac{1}{(1+a_i)^2} := \textrm{LB}_i \\ 
\label{eq:gSobolLBmain}
D_i^\tot = D_i \prod_{j \neq i}^d (1 + D_j) & \geq & \textrm{LB}_i . \sum_{j \neq i}^d \textrm{LB}_j  
\label{eq:gSobolLBtotal}
\end{eqnarray}
\end{example}

Notice that $32/\pi^4 \approx 0.328$ is very close to $1/3$. 
Hence, the lower bound for $D_i$ is very accurate.
Obviously, a very sharp inequality $D_i^\tot \geq \textrm{LB}_i \prod_{i \neq j}^d (1 + \textrm{LB}_j)$ 
could have been deduced, but this is unrealistic in practice, since the separable form of the function is unknown.
The lower bound~(\ref{eq:gSobolLBtotal}) for $D_i^\tot$ is actually a very good approximation of
the variance explained by second-order interactions involving $x_i$, equal to $D_i . \sum_{j \neq i}^d D_j$.
Hence, Inequality (\ref{eq:gSobolLBtotal}) will be less fine in presence of higher order interactions, 
(tuned by the values of the $a_j$'s).
Then, more than two eigenvalues in PDO expansions must be considered.\\

Let us give some computing details on the previous inequalities.
Without loss of generality, we write the proof for $i=1$.
Let us first recall the computation of Sobol' indices for the g-Sobol' function.
As all the $h_i$ are centered, the Sobol'-Hoeffding decomposition is given by $g_I(x_I) = \prod_{i \in I} h_i(x_i)$.
In particular $D_1 = \int h_1^2 d\mu_1 = \frac{1}{3} \frac{1}{(1+a_1)^2}$.
Furthemore, the variance of a second order interaction is, for $ i \neq 1$:
\begin{equation} \label{eq:gSobolAll2ndOrder}
D_{1,i} = E( h_1(x_1)^2 h_i(x_i)^2) = D_1 D_i,
\end{equation}
and variance explained by second-order interactions containing $x_1$ is equal to 
$$\sum_{i=2}^d D_{1,i} = D_1 \sum_{i=2}^d D_i.$$
Finally the total effect is the variance of $\sum_{I \supseteq \{1 \}} \prod_{i \in I} h_i$, equal to
$$ D_1^\tot = \sum_{I \supseteq \{1 \}} \prod_{i \in I} D_i = D_1 \prod_{i=2}^d (1 + D_i).$$

Let us now consider lower bounds. 
To obtain accurate lower bounds, we need to consider the first two non-zero eigenvalues. 
Indeed, the first non-zero eigenvector is even and all the dot products are 0.
By using Equation~(\ref{eq:diffOpLB2}) and the results about uniform distributions 
presented in Section~\ref{sec:DO_expansions}, we obtain:
\begin{equation} \label{eq:spectral2ndOrder}
D_1^\tot(g) \geq 
\frac{1}{\lambda_2^2} \left( \langle \frac{\partial g}{dx_1}, e'_{1,2} \rangle^2 
+ \sum_{i=2}^d  \langle \frac{\partial g}{dx_1}, e'_{1,2} e_{i, 2}\rangle^2 \right),
\end{equation}
with $e_{i,2} = \sqrt{2} \cos(2\pi x_i)$ (we omit the '-' sign) and $\lambda_2 = 4 \pi^2$.
We could have also used (\ref{eq:diffOpLB1}), 
but using derivatives simplifies the computations here.

The first term gives a lower bound for $D_1$. We have:
$$ \frac{\partial g}{dx_1}(x) = \frac{4}{1+a_1} \sgn(x_1) \prod_{i \geq 2} (1 + h_i(x_i)).$$
Due to the tensor form of the g-Sobol' function partial derivative, 
the dot product is expressed as a product of one-dimensional dot-products.
Furthermore, as all the $h_i's$ are centered, the dot-products in dimensions $2, \dots, d$ are equal to 1. 
Finally,
\begin{eqnarray*}
\langle \frac{\partial g}{dx_1}, e'_{1,2} \rangle = \langle h'_1, e'_{1,2} \rangle_1 
&=& \frac{4}{1+a_1} \int_{-1/2}^{1/2} \sgn(x_1)e'_{1,2}(x_1) dx_1 \\
&=& \frac{4}{1+a_1} \sqrt{2}. 2 \int_0^{1/2} 2 \pi \sin(2 \pi x_1) dx_1 = \frac{16 \sqrt{2}}{1+a_1}.\\
\end{eqnarray*}
This gives the announced lower bound for the main effect (Equation~(\ref{eq:gSobolLBmain})):
\begin{equation} \label{eq:gSobolLB}
\textrm{LB}_1 = \frac{1}{\lambda_2^2} \langle \frac{\partial g}{dx_1}, e'_{1,2} \rangle^2 =
\frac{1}{(4\pi^2)^2} \left(\frac{16 \sqrt{2}}{1+a_1} \right)^2 = \frac{32}{\pi^4} \frac{1}{(1+a_1)^2}.
\end{equation}

Now, let us compute the second term in (\ref{eq:spectral2ndOrder}). 
Notice that it is a lower bound for the variance explained by second-order interactions involving $x_1$,
as computed in (\ref{eq:gSobolAll2ndOrder}). As above, exploiting the tensor form, we have:
$$ \langle \frac{\partial g}{dx_1}, e'_{1,2} e_{i,2} \rangle 
= \langle h'_1, e'_{1,2} \rangle_1 
. \langle (1+h_i), e_{i,2} \rangle_i.$$
The first term has already been computed above. For the second one, we use the property of eigenvectors (\ref{eq:intPart}):
$$ \langle (1+h_i), e_{i,2} \rangle_i = \frac{1}{\lambda_2} \langle h'_i, e'_{i,2} \rangle_i$$
and we recognize the quantity computed above where we replace $1$ by $i$, equal to 
${\displaystyle \frac{1}{\lambda_2}\frac{16 \sqrt{2}}{1+a_i}} = \sqrt{\textrm{LB}_i}$.
Finally, plugging this result in (\ref{eq:spectral2ndOrder}) together with (\ref{eq:gSobolLB})
gives the announced lower bound~(\ref{eq:gSobolLBtotal}).


\section{Applications} \label{sec:applications}

In this section, two numerical models representing real physical phenomena are used in order to illustrate the usefulness of the
lower bounds of total Sobol' indices provided by PDO expansions. 
More precisely, we restrict ourselves to the simplest lower bound provided by considering only the first eigenfunctions in all dimensions, 
given by the two equivalent Equations~(\ref{ineq:diffOpLB1}) and (\ref{ineq:diffOpLB2}).
The first equation gives a derivative-free lower bound of the total index, here called \emph{PDO lower bound.} 
The second one gives a derivative-based version, here called \emph{PDO-der lower bound}. 

Whereas the PDO and PDO-der lower bounds are theoretically equal, their estimated values will differ.
Estimations of integrals and square products have been performed via crude Monte Carlo samples.
We have centered the function $f$.
It does not change the value of sensitivity indices but reduces the estimation error. 
The use of Monte Carlo samples allows to provide confidence intervals on the estimates by the way of a bootstrap resampling technique.
Boxplots will be used to graphically represent these estimation uncertainties.
Finally, the computation of eigenvalues, eigenfunctions and eigenfunction derivatives 
has been done with the numerical method presented in \cite{Roustant_Barthe_Iooss_2017}.

\subsection{A simplified flood model}

Our first model simulates flooding events by comparing the height of a river to the height of a dyke.
It involves the characteristics of the river stretch, as already studied in \cite{Lamboni_et_al_2013,Roustant_Barthe_Iooss_2017}.
The model has $8$ input random variables (r.v.), each one follows a specific probability distribution (truncated Gumbel, truncated normal, triangular or uniform). 
When the height of a river is over the height of the dyke, flooding occurs. 
The model output is the cost (in million euros) of the damage on the dyke which writes:
\begin{equation}
Y = \hbox{1\kern-.24em\hbox{I}}_{S>0} +  \left[0.2 + 0.8\left( 1-\exp^{-\frac{1000}{S^4}}\right) \right] \hbox{1\kern-.24em\hbox{I}}_{S \leq 0} + \frac{1}{20}\left(H_d \hbox{1\kern-.24em\hbox{I}}_{H_d>8} + 8 \hbox{1\kern-.24em\hbox{I}}_{H_d \leq 8} \right) \,,
\end{equation}
where $\hbox{1\kern-.24em\hbox{I}}_{A}(x)$ is the indicator function which is equal to 1 for $x \in A$ and 0 otherwise, $H_d$ is the height of the dyke (uniform r.v.) and $S$ is the maximal annual overflow (in meters) based on a crude simplification of the 1D hydro-dynamical equations of Saint-Venant under the assumptions of uniform and constant flowrate and large rectangular section.
$S$ is calculated as
\begin{equation}
S = \left(\frac{Q}{BK_s \sqrt{\frac{Z_m-Z_v}{L} }} \right)^{0.6} + Z_v - H_d - C_b \,,
\end{equation}
with $Q$ the maximal annual flowrate (truncated Gumbel r.v.), $K_s$ the Strickler coefficient (truncated Gaussian r.v.), $Z_m$ and $Z_v$ the upstream and downstream riverbed levels (triangular r.v.), $L$ and $B$ the length and width of the water section (triangular r.v.) and $C_b$ the bank level (triangular r.v.).
For this model, first-order and total Sobol' indices have been estimated in \cite{Lamboni_et_al_2013} with high precision (large sample size) via a Monte-Carlo based algorithms.

Fig. \ref{fig:anovaflood} shows the PDO lower bounds. 
By looking at the values of first-order and total Sobol' indices (horizontal straight lines), we notice that rather large interaction effects are present between four inputs of the model ($Q$, $K_s$, $Z_v$ and $H_d$). 
First, the bounds estimated with the sample size $n=100$ have large uncertainties.
It shows that this sample size is too small for this complex model (it includes non-linear and interaction effects).
Secondly, concerning the estimation of the bounds, the convergence is reached, with very small uncertainties on the estimates from $n=10\,000.$
From this sample size, we can visually check (e.g. looking at the third quartile) 
that estimated lower bounds are smaller than the corresponding true Sobol' indices.
Moreover, for smaller sample sizes as $n=1\,000$, results for all the inputs show sufficient accuracies (easy discrimination between the bounds).
Finally, except for $K_s$ and $H_d$, the bounds are informative because:
\begin{itemize}
\item The PDO bounds are very close to the theoretical values of total Sobol' indices, which is remarkable as only the first eigenvalue was used. 
\item The PDO lower bounds for total indices are larger than their respective first-order Sobol' indices.
\end{itemize}

\begin{figure}[!ht]
\begin{center}
\includegraphics[width=\textwidth]{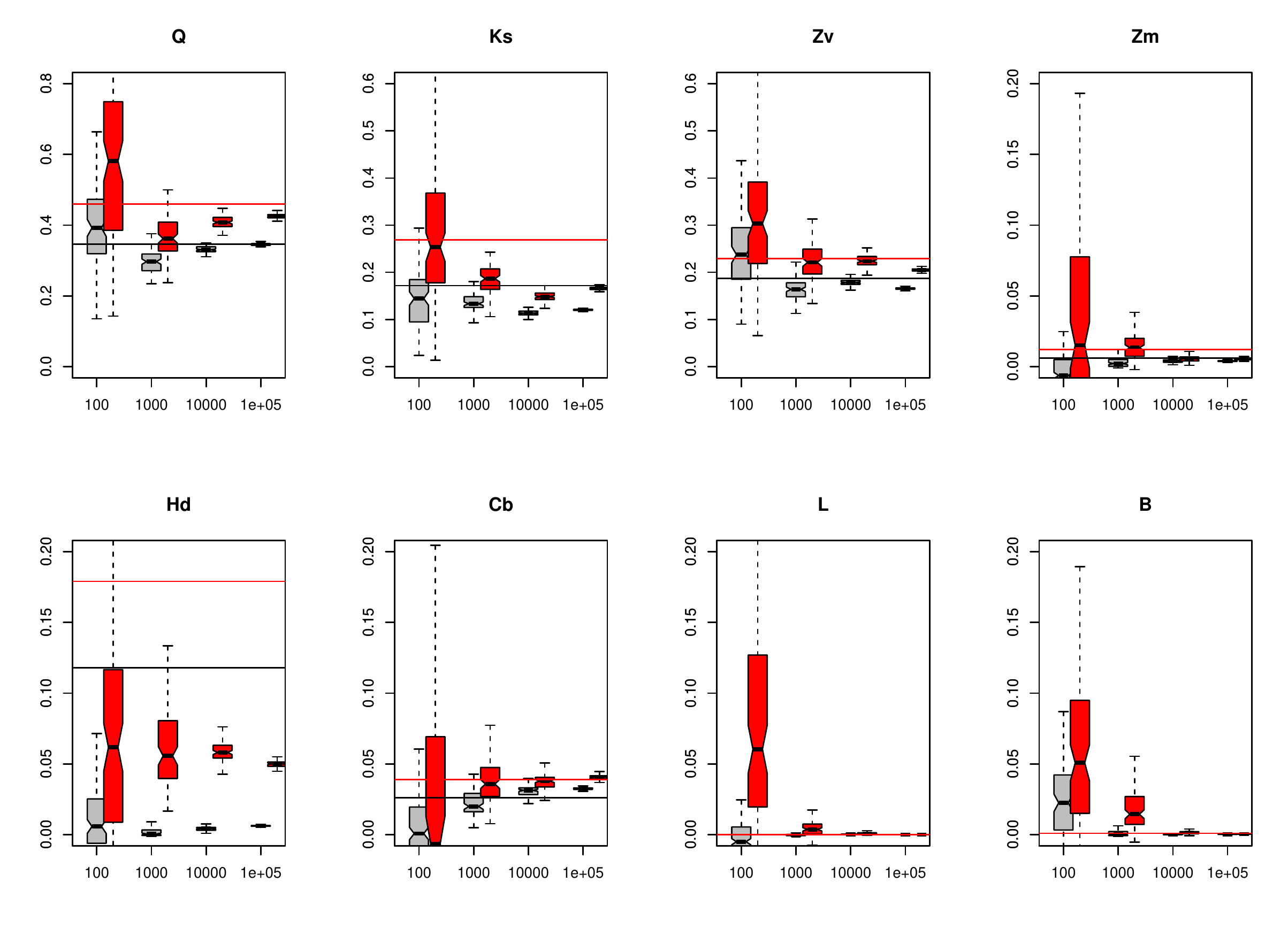}
\end{center}

\vspace{-0.8cm}
\caption{PDO bounds for the $8$ inputs of the flood model application for four different sample sizes $n$ ($10^2$, $10^3$, $10^4$ and $10^5$). Red (resp. gray) boxplots are lower bounds of total (resp. first-order) Sobol' indices. 
Horizontal lines indicate the `true' values of the Sobol' indices.}
\label{fig:anovaflood}
\end{figure}

Fig. \ref{fig:dgsmflood} shows that the PDO-der lower bounds give significantly better results than the PDO bounds, especially for small sample sizes. 
In particular, when the Sobol' indices are close to zero, the bounds perfectly match their respective Sobol' indices from $n=100$.
This result clearly favors the use of derivative-based lower bounds for the screening step when model derivatives can be computed.

\begin{figure}[!ht]
\begin{center}
\includegraphics[width=\textwidth]{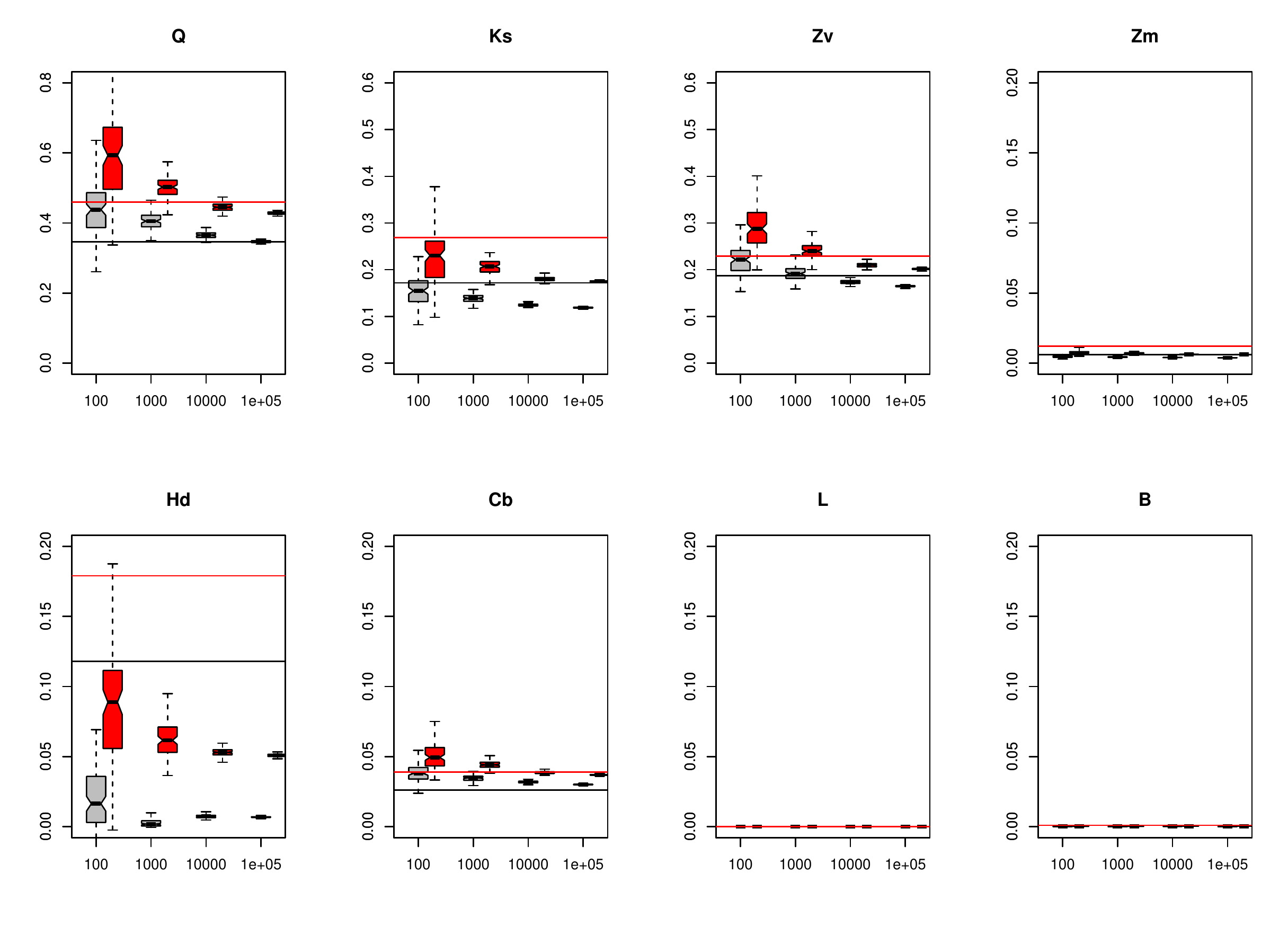}
\end{center}

\vspace{-0.8cm}
\caption{PDO-der bounds for the $8$ inputs of the flood model application. 
The legend details are the same as Figure~\ref{fig:anovaflood}.}
\label{fig:dgsmflood}
\end{figure}

\subsection{An aquatic prey-predator chain}

This application is related to the modeling of an aquatic ecosystem called MELODY (MESocosm structure and functioning for representing LOtic DYnamic ecosystems).
This model simulates the functioning of aquatic mesocosms as well as the impact of toxic substances on the dynamics of their populations. 
Inside this model, the Periphyton-Grazers sub-model is representative of processes involved in dynamics of primary producers and primary consumers, i.e. photosynthesis, excretion, respiration, egestion, mortality, sloughing and predation \cite{circif12}. 
It contains a total number of $d = 20$ uncertain input variables.
In order to conduct sensitivity analysis, \cite{circif12} has defined that each of these input variables are random following a uniform distribution law, defined by their minimal and maximal values.

The PDO-der upper bound of total Sobol' indices \cite{sobkuc09} was then applied in \cite{ioopop12} on one model output (the periphyton biomass) at only one reference time, day $60$ of simulations, which corresponds to the period of maximum periphyton biomass and a growth phase for grazers, according to experimental data.
A design of experiments of size $n=100$ was then provided, and simulated with MELODY.
A model output vector of size $100$ is obtained, as well as the derivatives of the output with respect to each input at each point of the design (matrix of size $100 \times 20$).
In this section, we analyze the same data that has been studied in \cite{ioopop12}.

Fig. \ref{fig:anovagrazer} shows the PDO lower bounds, as well as the first-order Sobol' indices estimates (via the local polynomials sample based technique \cite{davwah09}).
Good results are obtained on the first-order lower bounds which have reduced estimation uncertainties and are always smaller than 
the estimated first-order Sobol' indices. 
Less accurate estimates are obtained for the lower bounds of total indices. 
They remain informative because they are clearly larger than the first-order Sobol' indices.
This last result proves that large interactions between inputs dominate in this prey-predator model, which confirms the first analysis of \cite{ioopop12} (the sum of all the first-order Sobol' indices is much smaller than one).
The new results of Fig. \ref{fig:anovagrazer} prove the strong influence of some inputs which have large total lower bounds.
For example, $5$ inputs have total lower bound median values larger than $20\%$: Maximum photosynthesis rate (n$^\circ$1), Maximum consumption rate (n$^\circ$2), Rate of change per  $10^{\circ}$C (n$^\circ$9), Grazers preference for periphyton (n$^\circ$11) and Intrinsic mortality rate (n$^\circ$16).
This result cannot be found from the first-order Sobol' indices which are rather small (except for the Maximum photosynthesis rate).

\begin{figure}[!ht]
\begin{center}
\includegraphics[width=0.95\textwidth]{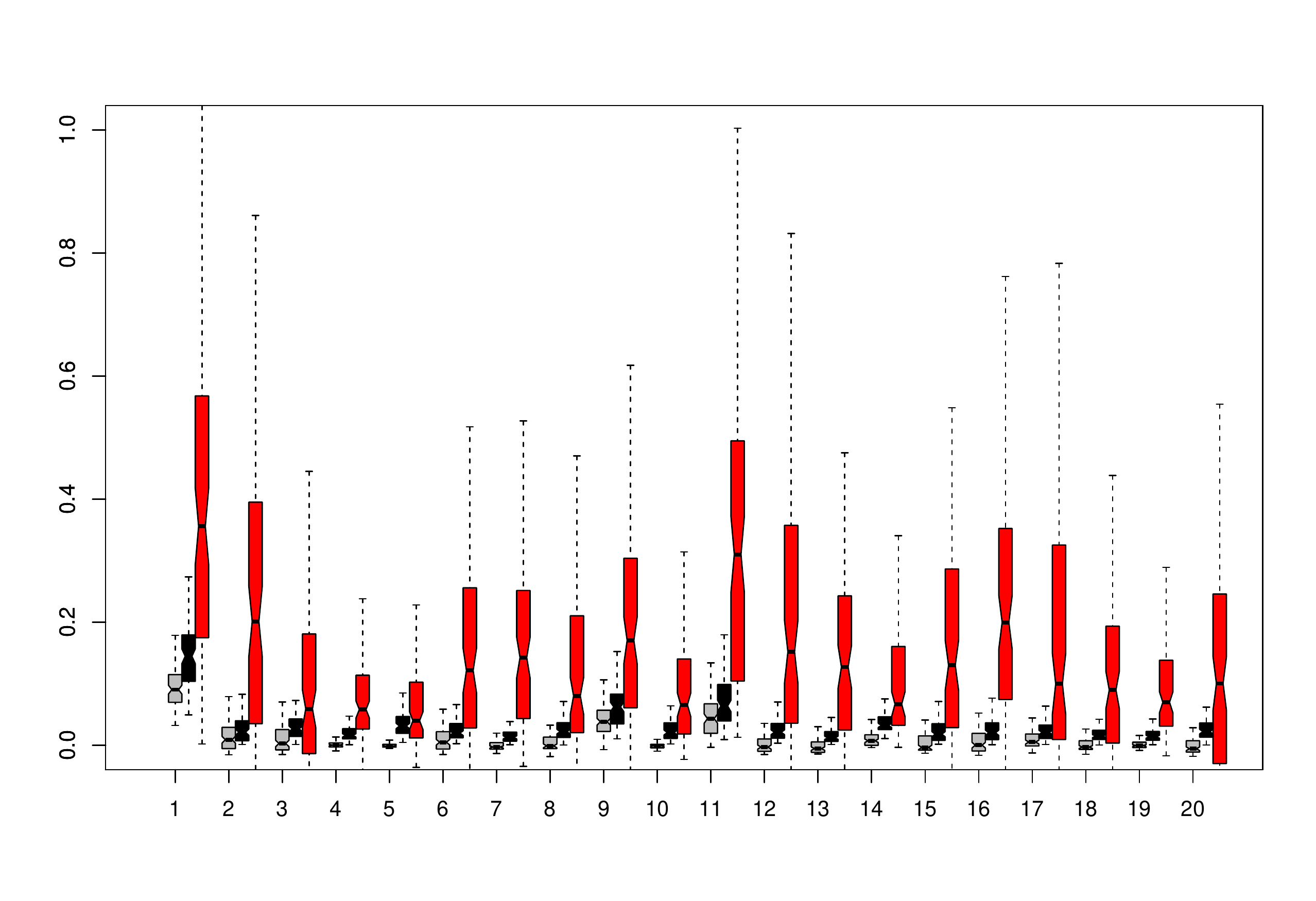}
\end{center}

\vspace{-0.8cm}
\caption{PDO bounds for the $20$ inputs of the prey-predator model. Gray, black and red boxplots are respectively the lower bounds of the first-order Sobol' indices, the estimates of the first-order Sobol' indices and the lower bounds of the total Sobol' indices.}\label{fig:anovagrazer}
\end{figure}

Fig. \ref{fig:dgsmgrazer} shows the PDO-der lower bounds, as well as the PDO-der upper bounds of the total Sobol' indices (see \cite{ioopop12}) whose confidence intervals are also obtained by bootstrap. 
In this figure lower and upper bounds of total Sobol' indices have been truncated to one in order to only consider realistic values.
Indeed, values larger than one are theoretically impossible but can sometimes be found due to numerical estimation errors.
First, some partial checks can be done by looking at the median of the estimated values, 
e.g. by observing that the lower bounds are smaller than the upper bounds for each input.
Second, several PDO-der lower bounds estimates are much less accurate than the (derivative-free) PDO lower bounds, 
especially when their values are large, for example the inputs n$^\circ$1 and n$^\circ$11.
Even in this case of large values, informative results can be deduced by taking their median values: 
the total Sobol' indices of the input n$^\circ$1 (resp. n$^\circ$11) approximately lie in $[0.85,1]$ (resp. $[0.4,1]$).
From these total lower bounds, a coarse importance hierarchy can then be proposed between the most influential inputs.

\begin{figure}[!ht]
\begin{center}
\includegraphics[width=0.95\textwidth]{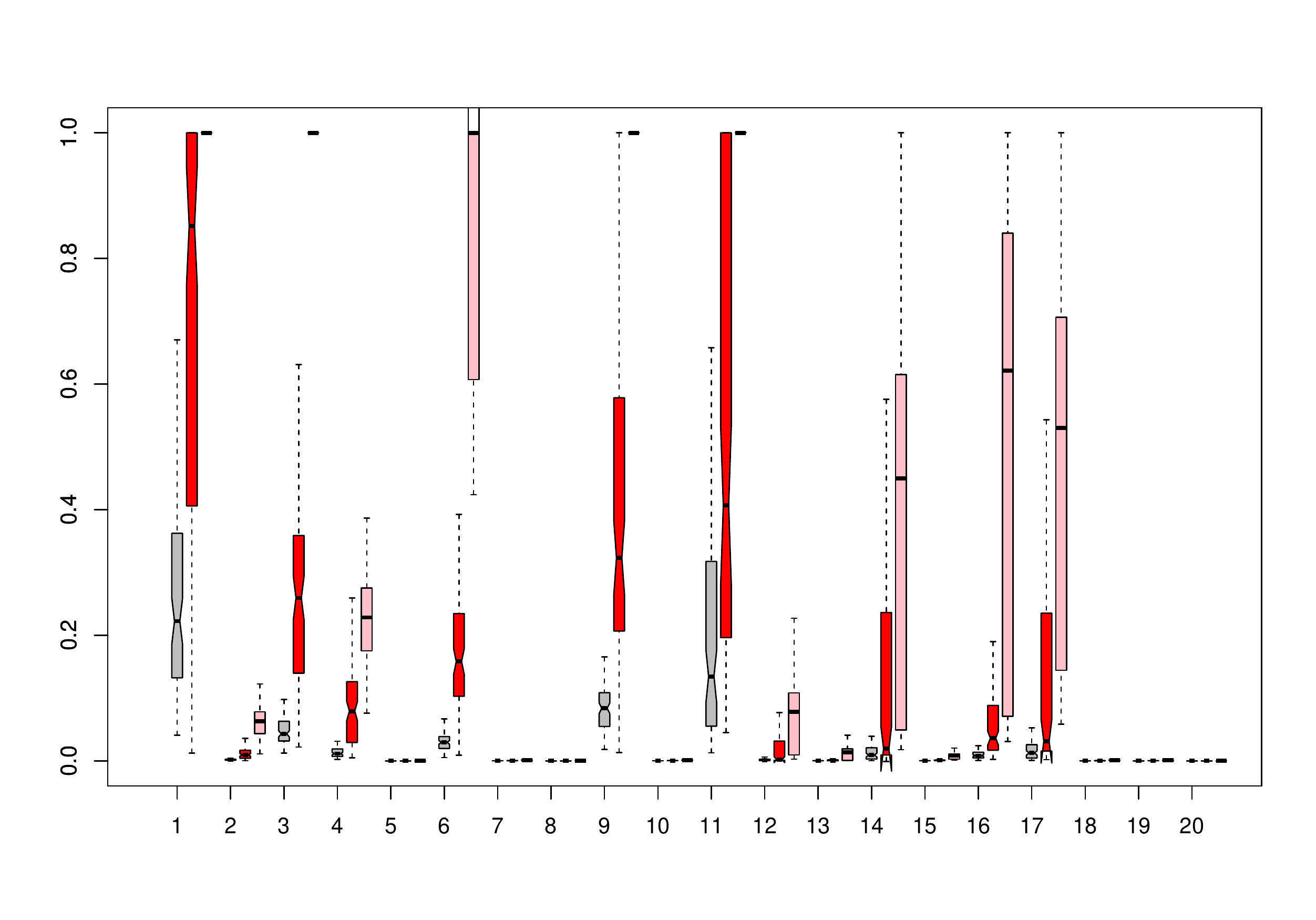}
\end{center}

\vspace{-0.8cm}
\caption{PDO-der bounds for the $20$ inputs of the prey-predator model. As in Figure~\ref{fig:anovagrazer}, 
gray and red boxplots are respectively lower bounds of the first-order and total Sobol' indices.
The additional pink boxplots correspond to the upper bound of the total Sobol' indices.}
\label{fig:dgsmgrazer}
\end{figure}

Finally, we observe the excellent results for non influential inputs which have all their PDO-der lower and upper bounds close to zero (inputs 2, 5, 7, 8, 10, 13, 15, 18, 19, 20).
This is not the case with the PDO lower bounds (see Fig. \ref{fig:anovagrazer}) which are more difficult to exploit.
A convenient usage would be to estimate both derivative-free and derivative-based lower bounds, and to keep the smallest value.
Indeed, the PDO bound is more accurate when the Sobol' index is much larger than zero, 
whereas the PDO-der bound is much smaller when the Sobol' index is close to zero.

\subsection{Conclusion on the applications}

On the two previous applications, we have tested the simplest PDO and PDO-der lower bounds, 
obtained by keeping only the first eigenvalue in all dimensions, for real-world models involving non-linear and interaction effects.
Several conclusions can be made:
\begin{itemize}
\item Lower bounds can be easily computed for any probability distribution of the inputs;
\item The estimation error can be large for small sample sizes. Estimating some boostrap confidence intervals 
is essential to evaluate the quality of the estimates;
\item The lower bounds of the total Sobol' indices are most of the times informative, 
i.e. larger than the (estimated) first order Sobol' indices;
\item Using derivatives (then DGSM) is sometimes preferable to obtain lower bounds, especially for the screening step (identification of non influential inputs with negligible total Sobol' indices).
With DGSM, excellent results are obtained for screening, even for small sample size cases. 
\end{itemize}
 

\section{Further works} \label{sec:conclusion}
In this paper, we revisit the so-called chaos expansion method for the evaluation of Sobol' indices. We summarize in a compact way the role played by the functional basis and the associated projection operators for evaluating by below these indices through a truncated Parseval formula. Generalized chaos basis built on the Poincar\'e diferential operator associated to the input distribution leads to very interesting new lower bounds for the total Sobol' index in terms of DGSM. This bound appears to be sharp both on toy and real life models, allowing a fast screening of the model input based on the energy of the function derivatives. This opens some challenging problems in mathematical statistics. First, the bounds obtained by the brute force truncation method could certainly been merely improved considering accurate model selection methods as adaptive thresholding or $l^1$ regularization. Second, the statistical estimation of the lower bound is a non linear semi-parametric problem. By non linear, we mean that the quantity to be estimated depends in a non linear way (here quadratic), of the infinite dimensional parameter (the function of interest). The estimation of a quadratic functional have been addressed in \cite{laurent1996efficient,
laurent2000adaptive,gine2008simple,da2013efficient}. It involves $U$-statistics theory, and offers an excellent source of inspiration for further works in mathematical statistics having concrete computational applications. For example, the unbiased estimation of such quantity for small sample appears to be an interesting challenging issue. As ending remark, notice that the use of PDO also opens challenging questions concerning the construction of such operators (and eigenbasis). First, one may be interested to build a PDO that provides a lower bound involving weighted DGSM. Secondly, one may wish to consider the case of heavy tail input distributions (as the Cauchy one for example).

\section*{Software and acknowledgement}
The implementations are partially based on the R package \texttt{sensitivity} \cite{sensitivityPackage}. 
The whole code should be included in a future version of that package.

Part of this research was conducted within the frame of the Chair in Applied Mathematics OQUAIDO, gathering partners in technological research (BRGM, CEA, IFPEN, IRSN, Safran, Storengy) and academia (CNRS, Ecole Centrale de Lyon, Mines Saint-Etienne, University of Grenoble, University of Nice, University of Toulouse) around advanced methods for Computer Experiments. The authors thank the participants for fruitful discussions. 
In particular we are grateful to A. Joulin for the insightful idea of using the Poincar\'e differential operator for computing lower bounds. Support from the ANR-3IA Artificial and Natural Intelligence Toulouse Institute is gratefully acknowledged.

\bibliographystyle{abbrv}
\bibliography{Sobol_lower_bound}

\begin{thebibliography}{10}

\bibitem{Allaire_2015}
G.~Allaire.
\newblock A review of adjoint methods for sensitivity analysis, uncertainty
  quantification and optimization in numerical codes.
\newblock {\em Ing{\'e}nieurs de l'Automobile}, 836:33--36, 2015.

\bibitem{Antoniadis_1984}
A.~Antoniadis.
\newblock Analysis of variance on function spaces.
\newblock {\em Statistics: A Journal of Theoretical and Applied Statistics},
  15(1):59--71, 1984.

\bibitem{BGL_book}
D.~Bakry, I.~Gentil, and M.~Ledoux.
\newblock {\em Analysis and geometry of Markov diffusion operators, volume 348
  of Grundlehren der Mathematischen Wissenschaften [Fundamental Principles of
  Mathematical Sciences]}.
\newblock Springer, Cham, 2014.

\bibitem{bakry2003characterization}
D.~Bakry and O.~Mazet.
\newblock Characterization of markov semigroups on ℝ associated to some
  families of orthogonal polynomials.
\newblock In {\em S{\'e}minaire de Probabilit{\'e}s XXXVII}, pages 60--80.
  Springer, 2003.

\bibitem{Bonnefont_Joulin_Ma_weighted_Poincare}
M.~Bonnefont, A.~Joulin, and Y.~Ma.
\newblock A note on spectral gap and weighted {P}oincar{\'e} inequalities for
  some one-dimensional diffusions.
\newblock {\em ESAIM: Probability and Statistics}, 20:18--29, 2016.

\bibitem{circif12}
C.~Ciric, P.~Ciffroy, and S.~Charles.
\newblock Use of sensitivity analysis to identify influential and
  non-influential parameters within an aquatic ecosystem model.
\newblock {\em Ecological Modelling}, 246:119--130, 2012.

\bibitem{cremar09}
T.~Crestaux, O.~L. Ma\^itre, and J.-M. Martinez.
\newblock Polynomial chaos expansions for uncertainties quantification and
  sensitivity analysis.
\newblock {\em Reliability Engineering and System Safety}, 94:1161--1172, 2009.

\bibitem{cukier1978nonlinear}
R.~Cukier, H.~Levine, and K.~Shuler.
\newblock Nonlinear sensitivity analysis of multiparameter model systems.
\newblock {\em Journal of computational physics}, 26(1):1--42, 1978.

\bibitem{da2013efficient}
S.~Da~Veiga and F.~Gamboa.
\newblock Efficient estimation of sensitivity indices.
\newblock {\em Journal of Nonparametric Statistics}, 25(3):573--595, 2013.

\bibitem{davwah09}
S.~{Da Veiga}, F.~Wahl, and F.~Gamboa.
\newblock Local polynomial estimation for sensitivity analysis on models with
  correlated inputs.
\newblock {\em Technometrics}, 51(4):452--463, 2009.

\bibitem{Efron_Stein_1981}
B.~Efron and C.~Stein.
\newblock The jackknife estimate of variance.
\newblock {\em The Annals of Statistics}, 9(3):586--596, 1981.

\bibitem{ernst2012convergence}
O.~G. Ernst, A.~Mugler, H.-J. Starkloff, and E.~Ullmann.
\newblock On the convergence of generalized polynomial chaos expansions.
\newblock {\em ESAIM: Mathematical Modelling and Numerical Analysis},
  46(2):317--339, 2012.

\bibitem{ghaspa91}
R.~Ghanem and P.~Spanos.
\newblock {\em Stochastic finite elements - A spectral approach.}
\newblock Berlin: Springer, 1991.

\bibitem{gine2008simple}
E.~Gin{\'e} and R.~Nickl.
\newblock A simple adaptive estimator of the integrated square of a density.
\newblock {\em Bernoulli}, 14(1):47--61, 2008.

\bibitem{Hoeffding_1948}
W.~Hoeffding.
\newblock A class of statistics with asymptotically normal distribution.
\newblock {\em Ann. Math. Statist.}, 19(3):293--325, 09 1948.

\bibitem{homsal96}
T.~Homma and A.~Saltelli.
\newblock Importance measures in global sensitivity analysis of non linear
  models.
\newblock {\em Reliability Engineering and System Safety}, 52:1--17, 1996.

\bibitem{sensitivityPackage}
B.~Iooss, A.~Janon, and G.~Pujol.
\newblock {\em sensitivity: Global Sensitivity Analysis of Model Outputs},
  2018.
\newblock R package version 1.15.2.

\bibitem{Iooss_Lemaitre_review}
B.~Iooss and P.~Lemaitre.
\newblock A review on global sensitivity analysis methods.
\newblock In C.~Meloni and G.~Dellino, editors, {\em Uncertainty management in
  Simulation-Optimization of Complex Systems: Algorithms and Applications},
  pages 101--122. Springer, 2015.

\bibitem{ioopop12}
B.~Iooss, A.-L. Popelin, G.~Blatman, C.~Ciric, F.~Gamboa, S.~Lacaze, and
  M.~Lamboni.
\newblock Some new insights in derivative-based global sensitivity measures.
\newblock In {\em Proceedings of the PSAM11 ESREL 2012 Conference}, pages
  1094--1104, Helsinki, Finland, June 2012.

\bibitem{ioosal17}
B.~Iooss and A.~Saltelli.
\newblock Introduction: {S}ensitivity analysis.
\newblock In R.~Ghanem, D.~Higdon, and H.~Owhadi, editors, {\em Springer
  Handbook on {Uncertainty Quantification}}, pages 1103--1122. Springer, 2017.

\bibitem{Kucherenko_Iooss_book}
S.~Kucherenko and B.~Iooss.
\newblock Derivative-based global sensitivity measures.
\newblock In R.~Ghanem, D.~Higdon, and H.~Owhadi, editors, {\em Springer
  Handbook on {Uncertainty Quantification}}, pages 1241--1263. Springer, 2017.

\bibitem{kucrod09}
S.~Kucherenko, M.~Rodriguez-Fernandez, C.~Pantelides, and N.~Shah.
\newblock Monte carlo evaluation of derivative-based global sensitivity
  measures.
\newblock {\em Reliability Engineering and System Safety}, 94:1135--1148, 2009.

\bibitem{Kucherenko_Song_2016}
S.~Kucherenko and S.~Song.
\newblock Derivative-based global sensitivity measures and their link with
  {S}obol' sensitivity indices.
\newblock In R.~Cools and D.~Nuyens, editors, {\em Monte Carlo and Quasi-Monte
  Carlo Methods}, pages 455--469, Cham, 2016. Springer International
  Publishing.

\bibitem{Kuo_et_al_2010}
F.~Kuo, I.~Sloan, G.~Wasilkowski, and H.Wo\'zniakowski.
\newblock On decompositions of multivariate functions.
\newblock {\em Mathematics of Computation}, 79(270):953--966, 2010.

\bibitem{Lamboni_et_al_2013}
M.~Lamboni, B.~Iooss, A.-L. Popelin, and F.~Gamboa.
\newblock Derivative-based global sensitivity measures: {G}eneral links with
  {S}obol' indices and numerical tests.
\newblock {\em Mathematics and Computers in Simulation}, 87:45--54, 2013.

\bibitem{laurent1996efficient}
B.~Laurent.
\newblock Efficient estimation of integral functionals of a density.
\newblock {\em The Annals of Statistics}, 24(2):659--681, 1996.

\bibitem{laurent2000adaptive}
B.~Laurent and P.~Massart.
\newblock Adaptive estimation of a quadratic functional by model selection.
\newblock {\em The Annals of Statistics}, 28(5):1302--1338, 2000.

\bibitem{pritar17}
C.~Prieur and S.~Tarantola.
\newblock Variance-based sensitivity analysis: {T}heory and estimation
  algorithms.
\newblock In R.~Ghanem, D.~Higdon, and H.~Owhadi, editors, {\em Springer
  Handbook on {Uncertainty Quantification}}, pages 1217--1239. Springer, 2017.

\bibitem{pro19}
L.~Pronzato.
\newblock Sensitivity analysis via {K}arhunen-{L}o\`eve expansion of a random
  field model: {Estimation of Sobol'} indices and experimental design.
\newblock {\em Reliability Engineering and System Safety}, 187:93--109, 2019.

\bibitem{Roustant_Barthe_Iooss_2017}
O.~Roustant, F.~Barthe, and B.~Iooss.
\newblock Poincar\'e inequalities on intervals - application to sensitivity
  analysis.
\newblock {\em Electron. J. Statist.}, 11(2):3081--3119, 2017.

\bibitem{sobol1969multidimensional}
I.~Sobol'.
\newblock Multidimensional quadrature formulas and {H}aar functions.
\newblock {\em Izdat" Nauka", Moscow}, 1969.

\bibitem{Sobol_1993}
I.~Sobol'.
\newblock Sensitivity estimates for non linear mathematical models.
\newblock {\em Mathematical Modelling and Computational Experiments},
  1:407--414, 1993.

\bibitem{Sobol_Gresham_1995}
I.~Sobol' and A.~Gershman.
\newblock On an alternative global sensitivity estimator.
\newblock In {\em Proceedings of SAMO 1995}, pages 40--42, Belgirate, Italy,
  1995.

\bibitem{sobkuc09}
I.~Sobol' and S.~Kucherenko.
\newblock Derivative based global sensitivity measures and their links with
  global sensitivity indices.
\newblock {\em Mathematics and Computers in Simulation}, 79:3009--3017, 2009.

\bibitem{Song_Weighted_Poincare}
S.~Song, T.~Zhou, L.~Wang, S.~Kucherenko, and Z.~Lu.
\newblock Derivative-based new upper bound of {S}obol' sensitivity measure.
\newblock {\em Reliability Engineering \& System Safety}, 187:142 -- 148, 2019.

\bibitem{sud08a}
B.~Sudret.
\newblock Global sensitivity analysis using polynomial chaos expansion.
\newblock {\em Reliability Engineering and System Safety}, 93:964--979, 2008.

\bibitem{Sudret_Mai_2015}
B.~Sudret and C.~V. Mai.
\newblock Computing derivative-based global sensitivity measures using
  polynomial chaos expansions.
\newblock {\em Reliability Engineering \& System Safety}, 134:241--250, 2015.

\bibitem{tissotThese}
J.-Y. Tissot.
\newblock {\em Sur la d{\'e}composition ANOVA et l'estimation des indices de
  Sobol'. Application {\`a} un mod{\`e}le d'{\'e}cosyst{\`e}me marin}.
\newblock PhD thesis, Grenoble University, 2012.

\bibitem{wiener1938homogeneous}
N.~Wiener.
\newblock The homogeneous chaos.
\newblock {\em American Journal of Mathematics}, 60(4):897--936, 1938.

\end{thebibliography}

\end{document}